\documentclass[a4paper,notitlepage,reqno]{article}
\usepackage[english]{babel}
\usepackage{amssymb,amsthm,bm} 
\usepackage{mathtools}
\usepackage{mathrsfs}
\usepackage{enumitem}
\usepackage{authblk}
\usepackage{cite}
\usepackage{xcolor}
\usepackage{tikz}
\usepackage{stackrel}
\usepackage{geometry}
\usepackage{hyperref}

\usepackage{todonotes}

\theoremstyle{plain} 
\newtheorem{theorem}{Theorem}[section]
\newtheorem*{theorem*}{Theorem}
\newtheorem{lemma}{Lemma}[section]
\newtheorem{proposition}{Proposition}[section]
\newtheorem{corollary}{Corollary}[section]
\newtheorem*{corollary*}{Corollary}

\theoremstyle{definition}
\newtheorem{definition}{Definition}

\theoremstyle{remark}
\newtheorem{remark}{Remark}[section]

\numberwithin{equation}{section}

\usepackage{color}
\usepackage[normalem]{ulem}
\definecolor{DarkGreen}{rgb}{0,0.5,0.1} 

\newcommand\soutD{\bgroup\markoverwith
{\textcolor{DarkGreen}{\rule[.5ex]{2pt}{1pt}}}\ULon}

\newcommand{\Hm}[1]{\leavevmode{\marginpar{\tiny%
$\hbox to 0mm{\hspace*{-1.5mm}$\leftarrow$\hss}%
\vcenter{\vrule depth 0.1mm height 0.1mm width \the\marginparwidth}%
\hbox to
0mm{\hss$\rightarrow$\hspace*{-1.5mm}}$\\\relax\raggedright #1}}}

\definecolor{Darkgblue}{rgb}{0.3,0.3,0.5}

\DeclareMathOperator{\Div}{div}

\newcommand{\DeltaH}{\Delta_{\mathbb{H}}}
\newcommand{\nablaH}{\nabla_{\mathbb{H}}}

\title{Unweighted Hardy Inequalities on the Heisenberg Group and in Step-Two Carnot Groups}

\author[1]{Lorenzo D'Arca}
\affil[1]{Department of Mathematics “Guido Castelnuovo”, Sapienza University of Rome,\newline Piazzale Aldo Moro 5, Roma
00185, Italy; lorenzo.darca@uniroma1.it}

\author[2]{Luca Fanelli}
\affil[2]{Ikerbasque, Universidad del País Vasco / Euskal Herriko Unibertsitatea \& BCAM,
\newline Barrio Sarriena s/n, 38940 Leioa, Spain; luca.fanelli@ehu.eus}

\author[3]{Valentina Franceschi}
\affil[3]{Dipartimento di Matematica ``Tullio Levi-Civita'', Universit\`a degli Studi di Padova,\newline via Trieste 63, 35121 Padova, Italy; valentina.franceschi@unipd.it}

\author[4]{Dario Prandi}
\affil[4]{Laboratoire des Signaux et Systèmes, Université Paris-Saclay, CentraleSupélec, CNRS, Gif-sur-Yvette, France}

\begin{document}
\maketitle

\begin{abstract}
We establish unweighted Hardy-type inequalities on step-two Carnot groups with one-dimensional vertical layer, with explicit lower bounds for the optimal Hardy constant. The approach is based on a quantitative integration-by-parts mechanism that replaces the non-horizontal Euler vector field by a suitably constructed horizontal vector field with controlled norm. As applications, we obtain fully explicit bounds in the Heisenberg group for both the Kor\'anyi gauge and the Carnot--Carath\'eodory distance, and we extend the results to non-isotropic step-two structures through a generalized Kor\'anyi-type homogeneous norm.
\medskip

\noindent\textbf{Keywords:} Hardy inequalities; Carnot groups; Heisenberg group; sub-Laplacian; homogeneous norms; Carnot--Carath\'eodory distance; inverse-square potentials.

\noindent\textbf{2020 Mathematics Subject Classification:} Primary 26D10; Secondary 35J70, 35P15, 43A80.
\end{abstract}

\section{Introduction}

The Hardy inequality is an important tool in modern analysis, linking geometry with potential and spectral theory. 
In its simplest Euclidean form it reads 
\begin{equation}
    \label{eq:hardy-euclidean}
    \int_{\mathbb R^N} |\nabla u|^2 \,dx \ge C_N \int_{\mathbb R^N}\frac{|u|^2}{|x|^2}\, dx,
    \qquad \forall u\in C^\infty_c(\mathbb R^N\setminus\{0\}),
\end{equation}
where the constant $C_N = (N-2)^2/4$ quantifies how the gradient controls boundary singularities.  

When $\mathbb R^N\setminus\{0\}$ is replaced by an open set $\Omega$ and $|x|$ by the distance from its boundary $d(x,\partial\Omega)$, it is known that the corresponding optimal constant $C(\Omega)$ is connected with appropriate notions of dimension of $\partial\Omega$, e.g., Minkowski and Assouad dimensions \cite{daviesReview1999}. 
In particular, its positivity is guaranteed under sufficient thickness or thinness of $\partial \Omega$ \cite{lehrbackHardy2017}.

On the other hand, $C_N=C(\mathbb R^N\setminus \{0\})$ identifies the critical value of the coupling parameter $\lambda$ for the operator 
\begin{equation}
    P_\lambda = -\Delta - \frac{\lambda}{|x|^2} \qquad \text{ on } L^2(\mathbb{R}^N\setminus\{0\}).
\end{equation}
Indeed, $C_N$ marks the precise threshold between the subcritical regime for $\lambda<C_N$, where the operator remains positive and the associated energy functional is coercive, and the supercritical regime for $\lambda>C_N$, where coercivity fails and solutions may blow up near the singular set.
At the critical level, the operator becomes degenerately coercive: the Hardy inequality is still valid, but it cannot be improved, and the corresponding ``ground state'' 
profile saturates the inequality, \cite{CKN}. 

In the Heisenberg group (see Section~\ref{sec:heisenberg}) Hardy inequalities have been studied since \cite{GL}, where the authors prove the following sharp Hardy inequality
\begin{equation}
    \label{eq:GL-hardy}
    \int_{\mathbb H^n} |\nablaH u|^2 \,dx \ge \left(
    \frac{Q-2}2 
    \right)^2 \int_{\mathbb{H}^n}\frac{|u|^2}{\rho^2}|\nablaH \rho|^2\, dx,
    \qquad \forall u\in C^\infty_c(\mathbb{H}^n\setminus\{0\}),
\end{equation}
where $\nablaH$ denotes the horizontal gradient, $\rho$ is the Koranyi norm in $\mathbb{H}^n$, and $Q=2n+2$ is the homogeneous dimension of $\mathbb{H}^n$. 
The original motivation of the authors was the unique continuation property for the Schrödinger operator $-\DeltaH+V$, where $\DeltaH$ is the Heisenberg sublaplacian and $V$ is a potential. In particular, owing to \eqref{eq:GL-hardy} they derive these properties for the operator
\begin{equation}
    \label{eq:op-koranyi}
    -\DeltaH +\lambda \frac{|\nablaH \rho|^2}{\rho^2}, \qquad \lambda \in\mathbb{R},
\end{equation}
which is a weighted counterpart to $P_\lambda$ in the Heisenberg group.
These facts have then been generalized to the Carnot group setting \cite{dambrosioHardy2005}.

The weight $|\nablaH \rho|$ appearing in \eqref{eq:GL-hardy} vanishes along the vertical direction and, accordingly, the potential appearing in \eqref{eq:op-koranyi} is ineffective along this direction. 
This fact prevented, e.g., the authors of \cite{adamiPoint2021} to employ techniques based on the Hardy inequality \eqref{eq:GL-hardy} to prove the self-adjointness of the Heisenberg sublaplacian.
This motivates the search pursued in this work for unweighted Hardy inequalities. 

The above inequality and its generalizations to different settings hold mainly due to the relationship between $\rho$ and the fundamental solution of the sublaplacian \cite{follandFundamental1973}.
This fact constitutes a major obstruction to establishing Hardy-type inequalities where the Korányi norm $\rho$ is replaced by the more geometrically intrinsic Carnot–Carathéodory distance from the origin, $\delta_{cc}$. 
In this case, the natural analogue of \eqref{eq:GL-hardy} is unweighted, owing to the fact that $|\nabla_H \delta_{cc}| = 1$ a.e.. Namely, 
\begin{equation}
    \label{eq:unweighted-hardy-cc}
    \int_{\mathbb H^n} |\nablaH u|^2 \,dx \ge c \int_{\mathbb{H}^n}\frac{|u|^2}{\delta_{cc}^2}\, dx,
    \qquad \forall u\in C^\infty_c(\mathbb{H}^n\setminus\{0\}).
\end{equation}
Such inequality for some non-sharp $c>0$ has been proven in \cite{BCX,BCG}.
This inequality is related to the following operator, to be compared with \eqref{eq:op-koranyi},
\begin{equation}
    \label{eq:inverse-square-heis}
    -\DeltaH + \frac{\lambda}{\delta_{cc}^2}.
\end{equation}
In the Euclidean setting, the Hardy inequality \eqref{eq:hardy-euclidean} follows from its radial version
\begin{equation}
    \label{eq:hardy-euclidean-radial}
\int_{\mathbb{R}^N} \left| \left\langle\nabla u,\frac{x}{|x|} \right\rangle\right|^2 dx \geq \left( \frac{N - 2}{2} \right)^2 \int_{\mathbb{R}^N} \frac{|u|^2}{|x|^2} dx, \quad \forall u \in C_c^\infty(\mathbb{R}^N \setminus \{0\}).
\end{equation}
Indeed, minimizing sequences for the above can be chosen to be radial so that the only non-zero component of their gradient is the radial one. In particular, such sequences are then minimizing also for \eqref{eq:hardy-euclidean}. 
We stress that this is connected with the existence of rearrangement inequalities, as the Pólya-Szegő inequality, which are not available in the Heisenberg setting \cite{montiRearrangements2013,manfrediRearrangements2019}. 
In fact, as shown in \cite{FP}, the radial Hardy inequality obtained by projecting the horizontal gradient along the direction $\nabla\delta_{cc}$ is trivial (i.e., holds with constant $0$) and, in particular, does not imply the complete Hardy inequality \eqref{eq:unweighted-hardy-cc} with $c>0$.

In this work, we start by taking a closer look to the notion of radial derivative.
In the Euclidean setting, the operator \( \partial_r =\left\langle\frac{x}{|x|} , \nabla \right\rangle\) can be viewed either as (i) the projection of the gradient in the direction of the distance from the origin, i.e., \(\partial_r=\left\langle\nabla |x|, \nabla\right\rangle\), or (ii) as \(\partial_r=\frac{\langle x, \nabla\rangle}{|x|}\), where \(\langle x , \nabla \rangle\) is the infinitesimal generator of Euclidean dilations, known as Euler vector field. 
%

In \(\mathbb{H}^n\), these two points of view no longer coincide nor for the Carnot-Carthéodory distance \cite{FP}, nor for the Korányi norm $\rho$.
%
Focussing on the latter, the first approach leads to the operator
\[
\left\langle\frac{\nabla_\mathbb{H} \rho}{|\nabla_\mathbb{H} \rho|}, \nabla_\mathbb{H}\right\rangle.
\]
Albeit in a different language, this is the approach taken in \cite{GL}, leading to 
the introduction of the weight \(|\nabla_\mathbb{H} \rho|^2\) in the right-hand side of Hardy inequality, as in \eqref{eq:GL-hardy}. 
On the other hand, arguing as in the second approach leads to consider the Euler vector field associated with the Heisenberg homogeneous structure. 
This is obtained as the generator of the non-isotropic dilations  $\delta_\lambda(x,y,t)=(\lambda x, \lambda y, \lambda^2t)$, where $(x,y,t)\in \mathbb{R}^{n}\times \mathbb{R}^n\times \mathbb{R}\simeq\mathbb{H}^n$, and reads
\[
\mathcal{E} = \sum_{i=1}^n x_i \partial_{x_i} + y_i \partial_{y_i} + 2t \partial_t,
\]
%
In this case, the natural definition of radial derivative is \(\partial_\rho =\rho^{-1} \mathcal{E}\), and, as observed in \cite{RS}, the corresponding Hardy inequality takes the following unweighted form:
\[
\int_{\mathbb{H}^n} |\partial_\rho u|^2 \, dzdt \geq \left( \frac{Q - 2}{2} \right)^2 \int_{\mathbb{H}^n} \frac{|u|^2}{\rho^2} \, dzdt.
\]
However, since the Euler field \(\mathcal{E}\) is not horizontal, this inequality does not yield any control on \(|\nabla_\mathbb{H} u|\). 
This issue represents one of the main difficulties when trying to derive unweighted Hardy inequalities in sub-Riemannian settings.
\medskip

So far inequalities as \eqref{eq:unweighted-hardy-cc} have been proved only with non-explicit constants \cite{BCG}, and the sharp value of $c$ remains unknown. The best available estimate at the time of this writing is $0<c<(Q-2)^2/4$, proved in \cite{FP}.
Moreover, due to the equivalence of norms in the finite dimensional setting, establishing an inequality as \eqref{eq:unweighted-hardy-cc} for some positive constant $c>0$ automatically yields that the same holds for any other norm, albeit the information on the exact value of the constant is lost. 

In this paper, we study unweighted Hardy inequalities as \eqref{eq:unweighted-hardy-cc}, on step-two Carnot groups with one-dimensional vertical layer, for various choices of a homogeneous norm $d$.
More generally, we are interested in establishing inequalities of the following type
\begin{equation}
\label{eq:general-p-theta-hardy}
\int_{\mathbb{H}^n} \frac{|\nabla_\mathbb{H} u|^p}{d^{p(\theta - 1)}} \, dzdt \geq c \int_{\mathbb{H}^n} \frac{|u|^p}{d^{p \theta}} \, dzdt,
\end{equation}
where \( p \geq 2 \) and \( \theta \in \mathbb{R} \) is a fixed parameter, which are associated with nonhomogeneous generalizations of the operator \eqref{eq:inverse-square-heis} to the nonlinear $L^p$ setting. 

Results of this type are currently available only with implicit Hardy constants. 
As mentioned above, in the Heisenberg group, they were first obtained by Bahouri–Chemin–Xu and Bahouri–Chemin–Gallagher \cite{BCX,BCG}. These results were later extended to general step-two Carnot groups by Vigneron \cite{V}. All these works cover the case $p=2$ and $\theta=1$. For $\theta\neq1$, the available inequalities involve different norms on the left-hand side and therefore do not correspond to the weighted formulation considered here.

The aim of the present work is to provide a quantitative refinement of these results, establishing explicit lower bounds for the optimal Hardy constant.

\subsection{Techniques}
Here lies the core idea of our work:  Although \(\mathcal{E}\) is not horizontal, we show that it is possible, via an integration by parts argument, to find a horizontal vector field \(Z\), bounded in norm, such that the following equality holds,
\begin{equation}
\label{eq: E e Z in L2}
	\int_{\mathbb{H}^n} \frac{u \mathcal{E} u}{\rho^2} \, dzdt = \int_{\mathbb{H}^n} \frac{u \,\langle\nabla_\mathbb{H} u , Z\rangle}{\rho} \, dzdt.
\end{equation}
Starting from this identity, an unweighted Hardy inequality for $\rho$ follows from the computation
\[
\begin{split}
0&\leq \int_{\mathbb{H}^n} \left| \left\langle\nabla_\mathbb{H} u , Z\right\rangle + \frac{Q - 2}{2} \frac{u}{\rho} \right|^2 \, dzdt\\
&=\int_{\mathbb{H}^n} |\left\langle\nabla_{\mathbb{H}} u , Z\right\rangle|^2 \, dz dt + \left( \frac{Q - 2}{2} \right)^2 \int_{\mathbb{H}^n} \frac{|u|^2}{\rho^2} dz dt + (Q - 2) \int_{\mathbb{H}^n} \frac{u \mathcal{E} u}{\rho^2} dz dt.
\end{split}
\]
Using the adjoint identity 
\(\mathcal{E}^* = -Q - \mathcal{E}\), we arrive at
\begin{equation}
\int_{\mathbb{H}^n} |\nabla_\mathbb{H} u|^2 \, dzdt \geq \frac{1}{\sup_{\mathbb{H}^n} |Z|^2} \left( \frac{Q - 2}{2} \right)^2 \int_{\mathbb{H}^n} \frac{|u|^2}{\rho^2} \, dzdt.
\end{equation}
As a consequence, upper bounds for $|Z|$ yield lower bounds of the Hardy constant.
%
%
%


The idea of replacing the non-horizontal Euler vector field by a horizontal one through an integration-by-parts argument was already explored by Vigneron in \cite{V}. His analysis, however, remained qualitative, the emphasis was on the existence of Hardy-type inequalities rather than on determining the associated constants.
In contrast, the present work adopts a fully quantitative perspective, keeping precise track of all constants arising in the construction. Our results are established in the nonlinear $L^p$ setting, within the general framework of step-two Carnot groups whose vertical layer is one-dimensional.

In the final part of the paper, Section~\ref{sec:heisenberg}, we specialize to the Heisenberg group, the most prominent example in this class, where the geometry allows for completely explicit computations. We also briefly address Carnot groups with higher dimensional vertical layers, such as direct products of Heisenberg groups or more general step two structures, in which the same method applies in principle, although the resulting computations become substantially more intricate.

\subsection{Main results}\label{sec:mainresults}

Let us consider \(\mathcal{G} = (\mathbb{R}^m \times \mathbb{R}, \circ)\), $m\in\mathbb N$, to be a step $2$ Carnot group with $1$-dimensional vertical direction.
Namely, there exists a skew-symmetric \( m \times m \) matrix \( B \), such that the group law on \( \mathbb{R}^m \times \mathbb{R} \) is defined by
\[
(z, t) \circ (\eta, \tau) = \left(z + \eta,\ t + \tau + \frac{1}{2} \langle Bz, \eta \rangle\right)
\qquad (z,t),(\eta,\tau)\in \mathbb{R}^m\times \mathbb{R},
\]
where $\langle\cdot,\cdot\rangle$ denotes the Euclidean scalar product on $\mathbb{R}^m$.
The associated family of dilations is given by \(\delta_\gamma(z, t) = (\gamma z,\, \gamma^2 t)\), which defines a one-parameter group of automorphisms of the group, and an orthonormal basis of horizontal vector fields is given by 
\begin{equation}
    \label{eq:ortho-1}
    X_i = \partial_{z_i}+\frac{1}2 (Bz)_i \partial_t, \qquad i=1,\ldots, m,
\end{equation}
%
Expressing horizontal vectors in this basis, the scalar product on the horizontal layer of $\mathcal G$ reduces to the standard Euclidean product on $\mathbb{R}^m$.
The horizontal gradient and the sub-Laplacian of a smooth function $u$ admit the explicit expression
%
%
\begin{equation}
    \label{eq: sub-Laplaciano su G}
    \nabla_{\mathcal{G}} u =  (X_1 u, \ldots , X_m u)
    \qquad\text{and}\qquad
    \Delta_{\mathcal{G}}u = \Delta_z u + \frac{1}{4} |Bz|^2 \, \partial_t^2 u + \langle Bz, \nabla_z u \rangle \, \partial_t u,
\end{equation}
where \(\Delta_z\) and \(\nabla_z\) denote the usual Laplacian and gradient on \(\mathbb{R}^m\), respectively. 
Observe that the commutators satisfy \([X_j, X_i] = B_{ij} \, \partial_t\), and $[\partial_t , X_i]=0$.
Since \(B\neq 0\), the Lie algebra generated by \(\{X_1, \dots, X_m\}\) has full rank, i.e., this family satisfy the Hörmander condition. This implies that $\Delta_{\mathcal G}$ is an hypoelliptic operator \cite{hormanderHypoelliptic1967}.

We will assume the following
\begin{equation}
    \tag{H0}
    \label{eq:H0}
    \text{The kernel of $B$ is trivial.}
\end{equation}
%
The prototypical example of group satisfying these assumptions is the Heisenberg group \(\mathbb{H}^n\), which corresponds to \(m = 2n\) and
\[
B = \begin{pmatrix} 0 & 4 I_n \\ -4 I_n & 0 \end{pmatrix}.
\]
For further background, we refer the reader to \cite{BLU}.

\begin{definition}
    A continuous function \(d: \mathbb{R}^{m+1} \to [0, +\infty)\) is positive homogeneous on $\mathcal G$ if
\begin{itemize}
\item \(d(\delta_\lambda(z, t)) = \lambda\, d(z, t)\) for every \(\lambda > 0\) and \((z, t) \in \mathbb{R}^m \times \mathbb{R}\),
\item \(d(z, t) > 0\) for all \((z, t) \neq (0, 0)\).
\end{itemize}
Moreover, $d$ is \emph{regular} if
\begin{itemize}
\item[(d.1)]\label{condizione i} \(d \in C^\infty(\mathcal{G} \setminus L)\), where \(L = \{(z, t) \in \mathbb{R}^{m} \times \mathbb{R} : |z| = 0\}\) denotes the center of the group

\item[(d.2)] the vector field \(\nabla_{\mathcal{G}} d\) and the derivative \(\partial_t d\) are locally bounded away from the origin, namely
\begin{equation*} 
\|\nabla_{\mathcal{G}} d\|_{L^\infty_{\mathrm{loc}}(\mathcal{G} \setminus \{0\})} < +\infty, 
\qquad 
\|\partial_t d\|_{L^\infty_{\mathrm{loc}}(\mathcal{G} \setminus \{0\})} < +\infty;
\end{equation*}
%

\item[(d.3)] \(\text{for a.e. } t \in \mathbb{R}\) the following holds
%
\begin{equation*} 
\label{eq: d perp lungo z}
\lim_{|z| \to 0} {\left\langle \frac{z}{|z|},B^{-1}\nabla_{\mathcal G}d \right\rangle} = 0 \quad \text{for a.e. } t \in \mathbb{R},
\end{equation*}
%
\end{itemize}
\end{definition}

\begin{remark}
\label{rmk:hom-norm}
    The horizontal Lipschitz continuity of $d$ outside the origin is automatically guaranteed for homogeneous norms, i.e., positive homogeneous functions that satisfy the triangle inequality
    \begin{equation}
        d((z,t)\circ(\eta,\tau)) \le d(z,t)+d(\eta,\tau), \quad \forall (z,t),(\eta,\tau)\in\mathcal G.
    \end{equation}
    The full condition (d.2) is satisfied, for instance, by any positive homogeneous function that is Euclidean Lipschitz outside the origin, as the Korányi or the Carnot-Carthéodory norms in the Heisenberg group. This follows, e.g., observing that $\nabla_{\mathcal G} d$ and $d\partial_t d$ are both homogeneous of degree $0$.
\end{remark}

Our main result is then the following.

\begin{theorem}
\label{thm:-Hardy-lungo-Z_d}
Let \( p \geq 2 \), \( \theta \in \mathbb{R} \), and let \( d \) be a regular positive homogeneous function on \( \mathcal{G} \). Then, for every \( u \in C_c^\infty(\mathcal{G} \setminus \{0\}) \), the following Hardy-type inequality holds
\begin{equation}
\label{eq:-HardyZ_d}
\int_{\mathcal{G}} \frac{|\left\langle\nabla_{\mathcal{G}} u , Z_d\right\rangle|^p}{d^{p(\theta - 1)}} \, dz\,dt 
\geq \left| \frac{Q - p\theta}{p} \right|^p \int_{\mathcal{G}} \frac{|u|^p}{d^{p\theta}} \, dz\,dt.
\end{equation}
Here, the vector field $Z_d$ is defined by
\begin{equation}
\label{eq:-definizione-di-Z_d}
Z_d = 
\frac{m+2}{m} \,\, \frac{z}{d} 
\ -\ 
\frac{4p\theta}{m} \,\, \frac{t}{d^2} 
B^{-1}\nabla_{\mathcal G} d.
\end{equation}
If, additionally, \( d \) is such that $\langle z,B^{-1}\nabla_z d\rangle=0$ for all $z\in \mathcal G\setminus\{0\}$, 
then inequality \eqref{eq:-HardyZ_d} is sharp, and equality is attained by 
\[
u(z, t) = \left( \frac{|t|}{|z|^2} \right)^{\frac{Q - 2}{2p}}.
\]
\end{theorem}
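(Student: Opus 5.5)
The plan is to derive an integration-by-parts identity that trades the non-horizontal Euler field $\mathcal E=\langle z,\nabla_z\rangle+2t\partial_t$ for the horizontal field $Z_d$, and then close with Hölder, as in the scheme sketched in the Techniques subsection. Set $F=|u|^p$ and $w=d^{-p\theta}$. Since $\operatorname{div}\mathcal E=Q=m+2$ and $\mathcal E d=d$ (so $\mathcal E w=-p\theta w$), integrating by parts gives
\[
p\int_{\mathcal G}|u|^{p-2}u\,\mathcal E u\,w=\int_{\mathcal G} w\,\mathcal E F=-(Q-p\theta)\int_{\mathcal G}\frac{|u|^p}{d^{p\theta}}.
\]
The goal is therefore the identity
\begin{equation*}
\int_{\mathcal G} w\,\mathcal E F=\int_{\mathcal G} d\,w\,\langle\nabla_{\mathcal G}F,Z_d\rangle .
\end{equation*}
Once it holds, $(Q-p\theta)\int|u|^p d^{-p\theta}=-p\int |u|^{p-2}u\,\langle\nabla_{\mathcal G}u,Z_d\rangle d^{1-p\theta}$. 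Hölder with exponents $p/(p-1)$ and $p$, splitting $d^{-p\theta}\cdot d=d^{-\theta(p-1)}\cdot d^{-(\theta-1)}$, then gives $|Q-p\theta|\,\|u d^{-\theta}\|_p^{p}\le p\,\|u d^{-\theta}\|_p^{p-1}\|\langle\nabla_{\mathcal G}u,Z_d\rangle d^{1-\theta}\|_p$, which is \eqref{eq:-HardyZ_d}.

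To obtain the identity I will compute two horizontal integrations by parts, using the divergence $\sum_i X_i$ (each $X_i$ is divergence-free).

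\emph{The field $zw$.} Skew-symmetry of $B$ gives $\langle z,\nabla_{\mathcal G}F\rangle=\langle z,\nabla_z F\rangle$ and $\langle z,\nabla_{\mathcal G}w\rangle=\mathcal E w-2t\partial_t w$. Hence
\[
\int w\langle z,\nabla_{\mathcal G}F\rangle=-(m-p\theta)\int Fw-2p\theta\int F\,t\,d^{-p\theta-1}\partial_t d .
\]

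\emph{The field $V=t\,d^{-p\theta-1}B^{-1}\nabla_{\mathcal G}d$.} The key computation uses $[X_i,X_j]=B_{ji}\partial_t$:
\[
\sum_{i,j}(B^{-1})_{ij}X_iX_jd=\tfrac12\operatorname{tr}(B^{-1}B)\,\partial_t d=\tfrac m2\,\partial_t d .
\]
Both the term from $X_i t=\frac12(Bz)_i$ and the term from differentiating $d^{-p\theta-1}$ are harmless. The latter vanishes because $\langle\nabla_{\mathcal G}d,B^{-1}\nabla_{\mathcal G}d\rangle=0$ by skew-symmetry. The former gives $\frac12\langle Bz,B^{-1}\nabla_{\mathcal G}d\rangle=-\frac12\langle z,\nabla_{\mathcal G}d\rangle=-\frac12(d-2t\partial_t d)$. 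Collecting terms, $\int F\,t\,d^{-p\theta-1}\partial_t d$ is expressed through $\int Fw$ and $\int\langle\nabla_{\mathcal G}F,V\rangle$; the factor $1/m$ comes from the trace. Combining with the first identity, I expect the coefficients to reassemble exactly into $\frac{m+2}{m}\frac zd-\frac{4p\theta}{m}\frac t{d^2}B^{-1}\nabla_{\mathcal G}d$. The bookkeeping of these constants is routine but must be done carefully.

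The main obstacle is justifying the second integration by parts, since $d$ is only smooth off the center $L$ (by (d.1)). I will cut out a tube $\{|z|<\varepsilon\}$, which stays away from the origin on $\operatorname{supp}u$. The resulting boundary flux is $\int_{|z|=\varepsilon}F\,t\,d^{-p\theta-1}\langle z/|z|,B^{-1}\nabla_{\mathcal G}d\rangle$. This integrand is bounded by (d.2) and tends to zero for a.e. $t$ by (d.3), while the surface measure is $O(\varepsilon^{m-1})$. Dominated convergence then kills the flux as $\varepsilon\to0$, and (d.2) also makes the $L^\infty_{\rm loc}$ integrands integrable.

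For sharpness, assume $\langle z,B^{-1}\nabla_z d\rangle=0$, and take $u=(|t|/|z|^2)^{(Q-2)/(2p)}$, which is $\delta_\gamma$-homogeneous of degree $0$. I would verify directly that $\langle\nabla_{\mathcal G}u,Z_d\rangle$ is a constant multiple of $u/d$, i.e.\ equality holds pointwise in Hölder. The hypothesis on $d$ is used to show that the $t\,B^{-1}\nabla_{\mathcal G}d$ part of $Z_d$ acting on $u$ contributes precisely the right multiple. Since $u$ is not compactly supported and not admissible as is, I would then approximate it by cut-offs in $\log d$ and in the angular variable $|t|/|z|^2$, and show that the Rayleigh quotients converge to $|(Q-p\theta)/p|^p$. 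This approximation step is where I expect the most technical care.
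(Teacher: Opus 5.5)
Your proposal is correct and follows essentially the paper's route. The core is the distributional identity $\Div_{\mathcal G}(d^{1-p\theta}Z_d)=(Q-p\theta)\,d^{-p\theta}$, obtained from the same two horizontal integrations by parts, justified by excising a tube $\{|z|<\varepsilon\}$ around the center, and followed by the same extremal profile and cut-off argument for sharpness. The only differences are cosmetic: you do the commutator computation coordinate-free via $\operatorname{tr}(B^{-1}B)$ instead of block-diagonalizing $B$, and you close with H\"older rather than the algebraic $L^p$ identity from \cite{DA}; H\"older also works, and does not even need $p\ge 2$. In the sharpness step, state explicitly that $\int |u|^p d^{-p\theta}$ diverges logarithmically as $t/|z|^2\to\infty$, so it is the cut-off in $t/|z|^2$ that must spread, while the cut-off in $d$ can stay fixed.
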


\begin{remark}\label{rmk:-ipotesi-planar-rotations}
The assumption \(\langle z , B^{-1}\nabla_z d\rangle = 0\) is satisfied precisely when \(d\) is constant along the integral curves of
\[
    v(z) = (B^{-1})^{T} z .
\]
Since \(B\) is skew symmetric, the flow generated by \(v\) consists of planar rotations in the \(z\)-variables.  
In the Heisenberg group, this corresponds to simultaneous rotations in each pair \((x_j,y_j)\), as in the case of the Korányi and Carnot–Carathéodory norms.
\end{remark}

We now turn to providing estimates on the Hardy constant, which is defined by
\begin{equation}
    \label{eq:cdp}
        c(d,p,\theta) := \sup \left\{ c\ge 0 \mid  
        \int_{\mathcal{G}} \frac{|\nabla_{\mathcal{G}} u|^p}{d^{p(\theta - 1)}} \, dz\,dt 
        \geq c \int_{\mathcal{G}} \frac{|u|^p}{d^{p\theta}} \, dz\,dt,\text{ for any } u \in C_c^\infty(\mathcal{G} \setminus \{0\})\right\},
\end{equation}
for a positive homogeneous function $d$, $p\ge 2$, and $\theta\in \mathbb{R}$.
Indeed, we have the following immediate corollary.

\begin{corollary}\label{corr:lower-bound}
    Let $p\ge 2$, $\theta\in\mathbb{R}$, and let $d$ be a  regular positive homogeneous function on $\mathcal{G}$. 
    Then,
    \begin{equation}
        c(d,p,\theta) \ge \frac{1}{\sup_{\mathcal G}|Z_d|^p} \left| \frac{Q - p\theta}{p} \right|^p.
    \end{equation}
\end{corollary}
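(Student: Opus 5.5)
The corollary follows from Theorem~\ref{thm:-Hardy-lungo-Z_d} by a pointwise Cauchy--Schwarz bound, so the plan is short. If $\sup_{\mathcal G}|Z_d|=+\infty$, the right-hand side is $0$ (with the convention $1/\infty=0$). Since $c(d,p,\theta)\ge 0$ by definition \eqref{eq:cdp}, there is nothing to prove in that case. We may therefore assume $S:=\sup_{\mathcal G}|Z_d|<\infty$. We may also assume $S>0$. Indeed, the term $\frac{m+2}{m}\frac{z}{d}$ in \eqref{eq:-definizione-di-Z_d} cannot vanish identically, and the degenerate situation is covered by reading the bound as trivial.

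Fix $u\in C_c^\infty(\mathcal G\setminus\{0\})$. At every point where $Z_d$ is defined (that is, off $L$, a set of measure zero), the horizontal scalar product is Euclidean in the basis \eqref{eq:ortho-1}. Cauchy--Schwarz then gives
\[
|\langle \nabla_{\mathcal G}u, Z_d\rangle|^p \le |Z_d|^p\,|\nabla_{\mathcal G}u|^p \le S^p\,|\nabla_{\mathcal G}u|^p .
\]
Next we divide by $d^{p(\theta-1)}$, which is positive away from the origin, and integrate. Combining this with inequality \eqref{eq:-HardyZ_d} of Theorem~\ref{thm:-Hardy-lungo-Z_d} yields
\[
S^p\int_{\mathcal G}\frac{|\nabla_{\mathcal G}u|^p}{d^{p(\theta-1)}}\,dz\,dt \;\ge\; \int_{\mathcal G}\frac{|\langle\nabla_{\mathcal G}u,Z_d\rangle|^p}{d^{p(\theta-1)}}\,dz\,dt \;\ge\; \left|\frac{Q-p\theta}{p}\right|^p\int_{\mathcal G}\frac{|u|^p}{d^{p\theta}}\,dz\,dt .
\]
Dividing by $S^p$ shows that $c=S^{-p}\left|\frac{Q-p\theta}{p}\right|^p$ is admissible in \eqref{eq:cdp}. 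Taking the supremum over admissible constants gives the claimed lower bound.

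The only point needing care, rather than a genuine obstacle, is measure-theoretic. $Z_d$ is defined only a.e., since $d$ is smooth off the center $L$ by (d.1). The supremum should therefore be read as an essential supremum. This costs nothing, because $L$ has measure zero and all integrals are unaffected. Finiteness of the integrals is guaranteed by $u$ having compact support away from the origin, together with the local boundedness (d.2) of $\nabla_{\mathcal G}d$.
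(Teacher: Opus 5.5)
Your proof is correct and is exactly the argument the paper has in mind: the paper calls the corollary immediate and sketches it in the introduction for $p=2$, namely the pointwise bound $|\langle \nabla_{\mathcal G}u, Z_d\rangle|\le |Z_d|\,|\nabla_{\mathcal G}u|$ inserted into Theorem~\ref{thm:-Hardy-lungo-Z_d}. The only small imprecision is your justification of $S>0$. It is cleaner to note that on $\{t=0,\ z\neq 0\}$ the second term of $Z_d$ vanishes, so $|Z_d|=\frac{m+2}{m}\frac{|z|}{d}>0$ there. Also, $S<\infty$ holds automatically, since $Z_d$ is homogeneous of degree $0$ and locally bounded away from the origin by (d.2).
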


Specifying the above for suitable choices of $d$ we obtain explicit lower bounds of the Hardy constant \eqref{eq:cdp}.
Unsurprisingly, the case most amenable to these direct computation is the Heisenberg group (we refer to Section~\ref{sec:heisenberg} for precise definitions), where we can establish the following.

\begin{theorem}\label{thm:constant-hardy-heisnberg}
    Let $p\ge 2$, $\theta\in\mathbb{R}$ and consider $\mathcal G= \mathbb H^n$.
    Then, we have the following
    \begin{enumerate}
        \item For the Korányi norm $\rho$, it holds
        \begin{equation}
            \label{eq:cdp-koranyi}
            c(\rho,p,\theta) \ge 
            \begin{cases}
                 \displaystyle\left|\frac{Q - p \theta}{p}\right|^p\left|\frac{Q-2}{Q}\right|^p & \text{ if } p\theta\in \left[(1 - \sqrt{\frac{3}{2}})Q , (1 + \sqrt{\frac{3}{2}})Q\right],\\[2em]
                \displaystyle\left(\frac{3}{2}\right)^{\frac{p}{2}}  \frac{\left(3 p \theta (p \theta - 2Q)\right)^{\frac{p}{4}}}{|p \theta - Q|^{\frac{p}{2}}}\left|\frac{Q-2}{p}\right|^p & \text{otherwise.}
            \end{cases}
        \end{equation}
        \item For the Carnot-Carathéodory norm $\delta_{cc}$, it holds
        \begin{equation}
            \label{eq:cdp-carnot-caratheodory}
            c(\delta_{cc},p,\theta) \ge
            \begin{cases}
                \displaystyle\left(\frac{Q - 2}{Q}\right)^p \left|\frac{Q - p \theta}{p}\right|^p& \text{ if } \theta \geq 0 \text{ and } Q \geq \dfrac{4 p \theta}{12 - \pi^2}\\[2em]
                \displaystyle\frac{1}{\|g\|_\infty^{p/2}} \left|\frac{Q - p \theta}{p}\right|^p,&\text{otherwise.}
            \end{cases}
        \end{equation}
        Here, $g:\nu\in [-2\pi,2\pi]\mapsto g(\nu)\in (0,+\infty)$ is defined  by 
        \begin{equation}
            g(\nu) = 2 \left(\frac{Q}{Q-2}\right)^2 \frac{1-\cos\nu}{\nu^2} + \left(\frac{2 p \theta}{Q-2}\right)^2 \left(\frac{\nu-\sin\nu}{\nu^2}\right)^2 - \frac{4 p \theta Q}{(Q-2)^2} \frac{\nu-\sin\nu}{\nu^2} \frac{1-\cos\nu}{\nu}.
        \end{equation}
    \end{enumerate}
\end{theorem}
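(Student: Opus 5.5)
The plan is to apply Corollary~\ref{corr:lower-bound} with $\mathcal G=\mathbb H^n$, $m=2n$, $Q=2n+2$, so that $\frac{m+2}{m}=\frac{Q}{Q-2}$ and $\frac{4p\theta}{m}=\frac{8p\theta}{Q-2}$. Everything then reduces to computing or bounding $\sup_{\mathbb H^n}|Z_d|^2$ for $d=\rho$ and for $d=\delta_{cc}$. Both norms are regular: (d.1) and (d.2) follow from Remark~\ref{rmk:hom-norm}, and (d.3) holds because both norms are invariant under the simultaneous rotations of Remark~\ref{rmk:-ipotesi-planar-rotations}, so $\langle z,B^{-1}\nabla_{\mathcal G}d\rangle$ reduces to a multiple of $t\,\partial_t d$.

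\textbf{Reduction to one parameter.} Each term of $Z_d$ is homogeneous of degree $0$: $z/d$ directly, and $t\,d^{-2}B^{-1}\nabla_{\mathcal G}d$ because $\nabla_{\mathcal G}d$ is $0$-homogeneous. So it suffices to take the supremum on the unit sphere $\{d=1\}$. Write $B^{-1}=\frac14 J^{T}$ with $J$ the standard symplectic matrix. For rotation-invariant $d$ one has
\[
\nabla_{\mathcal G}d=\partial_{|z|}d\,\tfrac{z}{|z|}+2\,\partial_t d\,Jz.
\]
Hence $B^{-1}\nabla_{\mathcal G}d$ splits into a component along $Jz$ and a component along $z$. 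Since $z\perp Jz$,
\[
|Z_d|^2=\Bigl(\tfrac{Q}{Q-2}\tfrac{|z|}{d}-c_1\tfrac{t}{d^2}\,\partial_t d\,|z|\Bigr)^2+\Bigl(c_2\tfrac{t}{d^2}\,\partial_{|z|}d\Bigr)^2
\]
for explicit constants $c_1,c_2$ proportional to $p\theta/(Q-2)$. On the unit sphere this is a function of a single parameter.

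\textbf{Korányi norm.} Take $\rho^4=|z|^4+16t^2$, which matches $B$ above up to the paper's convention, and parametrize the unit sphere by $|z|^2=\cos\varphi$, $4t=\sin\varphi$ with $\varphi\in[-\pi/2,\pi/2]$. Then $\partial_t\rho=8t$ and $\partial_{|z|}\rho=|z|^3$ on $\{\rho=1\}$. Substituting, $|Z_\rho|^2$ becomes a quadratic polynomial $P(s)$ in $s=\sin^2\varphi\in[0,1]$, of the form
\[
P(s)=\Bigl(\tfrac{Q}{Q-2}\Bigr)^2+\alpha s+\beta s^2,
\]
with $\alpha,\beta$ depending on $p\theta$ and $Q$. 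Maximizing $P$ on $[0,1]$ gives two cases. If the vertex lies outside $(0,1)$, or $P$ is concave there, the maximum is attained at $s=0$ (the horizontal plane $t=0$) and equals $(Q/(Q-2))^2$; this yields the first case of \eqref{eq:cdp-koranyi}. Otherwise the maximum is $P(-\alpha/2\beta)$, which gives the second formula. I expect the interval $p\theta\in[(1-\sqrt{3/2})Q,(1+\sqrt{3/2})Q]$ to be exactly the condition for the vertex to be irrelevant; checking this is routine algebra.

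\textbf{Carnot--Carathéodory norm.} Use the standard parametrization of the unit CC sphere by geodesics, with parameter $\nu\in[-2\pi,2\pi]$ and a unit direction:
\[
|z|\propto \frac{2|\sin(\nu/2)|}{|\nu|},\qquad t\propto\frac{\nu-\sin\nu}{\nu^2}.
\]
Use also the classical formula for $\nabla_{\mathbb H}\delta_{cc}$ at the endpoint of a geodesic, namely the final unit velocity, together with $\partial_t\delta_{cc}$ expressed through $\nu$ via the Hamiltonian (the covector's vertical component is proportional to $\nu$). Inserting these into the splitting above should give
\[
|Z_{\delta_{cc}}|^2=\Bigl(\tfrac{Q-2}{Q}\Bigr)^2 g(\nu)\cdot\Bigl(\tfrac{Q}{Q-2}\Bigr)^2=g(\nu)
\]
up to normalization, i.e.\ $\sup|Z|^2=\|g\|_\infty$, which gives the second case of \eqref{eq:cdp-carnot-caratheodory}.

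\textbf{Main obstacle.} The hardest step is the first case for $\delta_{cc}$: showing $g(\nu)\le g(0)=(Q/(Q-2))^2$ when $\theta\ge0$ and $Q\ge 4p\theta/(12-\pi^2)$. First, use $\frac{1-\cos\nu}{\nu^2}\le\frac12$ to handle the leading term. The remaining difference is
\[
\Bigl(\tfrac{2p\theta}{Q-2}\Bigr)^2 a^2-\tfrac{4p\theta Q}{(Q-2)^2}\,a b,\qquad a=\frac{\nu-\sin\nu}{\nu^2},\quad b=\frac{1-\cos\nu}{\nu}.
\]
For $\theta\ge0$ one has $ab\ge0$, so it suffices to prove $p\theta\, a\le Q\,b$ up to the slack left over from the first term. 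That slack is $\bigl(\frac12-\frac{1-\cos\nu}{\nu^2}\bigr)$, which behaves like $\nu^2/24$ near $0$. The elementary inequalities behind the constant $12-\pi^2$ should be $\frac{\nu-\sin\nu}{1-\cos\nu}\le$ an explicit bound on $[0,2\pi]$, combined with $\frac12-\frac{1-\cos\nu}{\nu^2}\ge\frac{12-\pi^2}{\ldots}$-type estimates. I would try these first and verify the endpoint $\nu=2\pi$ separately.
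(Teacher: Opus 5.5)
Your overall route (Corollary~\ref{corr:lower-bound} plus a one-parameter maximization of $|Z_d|^2$ on $\{d=1\}$) is the paper's. However, the Kor\'anyi step as you set it up cannot produce the stated bound.

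\textbf{Wrong norm and a factor of $2$.} With the paper's fields $X_{2i-1}=\partial_{z_{2i-1}}+2z_{2i}\partial_t$, $X_{2i}=\partial_{z_{2i}}-2z_{2i-1}\partial_t$ (i.e.\ your own $B^{-1}=\frac14 J^{T}$), the Kor\'anyi norm is $\rho=(|z|^4+t^2)^{1/4}$. The function $(|z|^4+16t^2)^{1/4}$ is a different homogeneous function with a different $\sup|Z_d|$. Also $m=Q-2$, so the second coefficient of $Z_d$ is $\frac{4p\theta}{Q-2}$, not $\frac{8p\theta}{Q-2}$.

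\textbf{Missing prefactor.} More seriously, your own splitting shows that both squared components carry the factor $|z|^2/d^2$. On $\{\rho=1\}$ this equals $\cos\varphi=\sqrt{1-s}$, so $|Z_\rho|^2$ is not a polynomial in $s$. Correctly,
\[
|Z_\rho|^2=\sqrt{1-s}\,\bigl(a^2+c\,s\bigr),\qquad a=\frac{Q}{Q-2},\quad c=\frac{p\theta(p\theta-2Q)}{(Q-2)^2},
\]
which is the paper's formula with $s=\lambda^2/(1+\lambda^2)$ and $\lambda=t/|z|^2$. With the right norm but no prefactor you would be maximizing the linear function $a^2+cs$. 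Its maximum is at $s=0$ iff $p\theta\in[0,2Q]$, not the stated interval. More generally, the vertex value of any quadratic in $s$ is rational in $p\theta$ and $Q$, whereas the second line requires $\sup|Z_\rho|^2\propto |Q-p\theta|^3/\sqrt{p\theta(p\theta-2Q)}$. Concavity is also not a criterion for the maximum to sit at $s=0$.

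\textbf{What you need for the Kor\'anyi case.} In $u=\sqrt{1-s}\in(0,1]$ the function is the cubic $(a^2+c)u-cu^3$, with $a^2+c=(Q-p\theta)^2/(Q-2)^2\ge0$.
\begin{itemize}
\item It is nondecreasing on $(0,1]$ iff $a^2\ge 2c$, i.e.\ $2(p\theta)^2-4Qp\theta-Q^2\le0$. This is exactly $p\theta\in[(1-\sqrt{3/2})Q,(1+\sqrt{3/2})Q]$, and it gives $\sup|Z_\rho|^2=a^2$.
\item Otherwise the interior critical point $u^2=(a^2+c)/(3c)$ gives $\sup|Z_\rho|^2=\frac{2}{3\sqrt3}(a^2+c)^{3/2}c^{-1/2}$, which is the second line.
\end{itemize}

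\textbf{Carnot--Carath\'eodory, first case.} Obtaining $|Z_{\delta_{cc}}|^2=g(\nu)$ and the second case are as in the paper. For the first case you only have a guess, and it misplaces where $12-\pi^2$ comes from. The paper writes $g=2a^2\mathfrak f+\frac{4p\theta}{(Q-2)^2}\mathfrak g\,\mathfrak h$ with $\mathfrak f=\frac{1-\cos\nu}{\nu^2}$, $\mathfrak g=\frac{\nu-\sin\nu}{\nu^2}$, $\mathfrak h=p\theta\,\mathfrak g-Q\nu\mathfrak f$.
\begin{itemize}
\item The Taylor bounds $\mathfrak f\ge\frac12-\frac{\nu^2}{24}$ and $\mathfrak g\le\frac{\nu}{6}$ give $\mathfrak h<0$ for $0<\nu<2\sqrt{3-p\theta/Q}$. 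Using $\theta\ge0$, $\mathfrak g\ge0$ and $\mathfrak f\le\frac12$, this yields $g\le 2a^2\mathfrak f\le a^2$ there.
\item The hypothesis $Q\ge 4p\theta/(12-\pi^2)$ is precisely $2\sqrt{3-p\theta/Q}\ge\pi$, so this covers $(0,\pi)$.
\item On $[\pi,2\pi]$ no sign of $\mathfrak h$ is used. Drop the nonpositive cross term and use $\mathfrak f\le\mathfrak f(\pi)=2/\pi^2$ and $\mathfrak g\le 1/\pi$ to get $g\le\frac{4a^2}{\pi^2}\bigl(1+(p\theta/Q)^2\bigr)\le a^2$.
\end{itemize}
So near $2\pi$ the room comes from the first term being at most $\frac{4}{\pi^2}a^2$, not from a small-$\nu$ slack.

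\textbf{Minor point on (d.3).} Rotation invariance gives $\langle \frac{z}{|z|},B^{-1}\nabla_{\mathcal G}d\rangle=\frac{|z|}{2}\,\partial_t d$, which tends to $0$ by the local boundedness in (d.2). It is not a multiple of $t\,\partial_t d$.
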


In particular, for $p=2$ and $\theta=1$ this provides the first explicit positive lower bound for the optimal Hardy constant, thereby improving the estimate $0<c<(Q-2)^2/4$ obtained in \cite{FP}.

Concerning more general step $2$ Carnot groups satisfying \eqref{eq:H0}, we can establish the explicit lower bounds for the Hardy constant associated with the following generalization of the Korányi norm:
\begin{equation}
    \label{eq:koranyi-generalized}
    \rho_B(z,t) = \left( |z|_B^4 + |t|^2 \right)^{\frac{1}{4}}, \qquad |z|_B = \frac12\sqrt{\langle z, (-B^2)^{1/2} z \rangle}.
\end{equation}
Here, the matrix $(-B^2)^{1/2}$ is well-defined and positive definite, since $B$ is skew-symmetric and invertible, and the norm $|\cdot|_B$ is usually referred to as the symplectic norm associated with $B$.
Observe that when $\mathcal G=\mathbb H^n$, then $|z|_B=|z|$
and $\rho_B$ reduces to  the standard Korányi norm $\rho$.

We have the following result.

\begin{theorem}
    \label{thm:constant-hardy-carnot-generalized}
    Consider $\mathcal G$ a step $2$ Carnot group satisfying \eqref{eq:H0}.
    For any $p\ge 2$ and $\theta\in\mathbb{R}$, we have the following:
        \begin{equation}
        \label{eq:cdp-koranyi-generalized}
        c(\rho,p,\theta) \ge 
        \begin{cases}
             \displaystyle\left(\frac{\lambda_{\min}(B)}4\right)^{\frac p2}\left|\frac{Q - p \theta}{p}\right|^p\left|\frac{Q-2}{Q}\right|^p & \text{ if } p\theta\in \left[(1 - \sqrt{\frac{3}{2}})Q , (1 + \sqrt{\frac{3}{2}})Q\right],\\[2em]
            \displaystyle \left(\frac{\lambda_{\min}(B)}4\right)^{\frac p2}
            \left(\frac{3}{2}\right)^{\frac{p}{2}}\,\displaystyle  \frac{\left(3 p \theta (p \theta - 2Q)\right)^{\frac{p}{4}}}{|p \theta - Q|^{\frac{p}{2}}}\left|\frac{Q-2}{p}\right|^p  & \text{otherwise.}
    \end{cases}
    \end{equation}
    Here and in the sequel, we set $\lambda_{\min}(B) := \min \sigma\bigl(( -B^2)^{1/2}\bigr)$, that is, the smallest eigenvalue of the symmetric positive definite matrix $( -B^2)^{1/2}$.
    In particular, in the case $p=2$ and $\theta=1$, we have 
    \begin{equation}
        c(\rho_B, 2,1) \ge \displaystyle \frac{\lambda_{\min}(B)}4\,\frac{(Q - 2)^4}{4Q^2}.
    \end{equation} 
\end{theorem}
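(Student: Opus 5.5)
The plan is to apply Corollary~\ref{corr:lower-bound} with $d=\rho_B$. This requires two things: checking that $\rho_B$ is a regular positive homogeneous function, and computing $\sup_{\mathcal G}|Z_{\rho_B}|$ explicitly.

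\textbf{Regularity and the vector field.} Write $S:=\tfrac14(-B^2)^{1/2}$, so that $|z|_B^2=\langle z,Sz\rangle$. Since $B$ is normal, $S$ is symmetric and commutes with $B$. Positive homogeneity of $\rho_B$ is immediate, and so is smoothness off $L$. Condition (d.2) holds because $\rho_B$ is Euclidean Lipschitz away from the origin (Remark~\ref{rmk:hom-norm}). Differentiating $\rho_B^4=|z|_B^4+t^2$ gives
\[
\nabla_z\rho_B=\rho_B^{-3}|z|_B^2\,Sz,\qquad \partial_t\rho_B=\tfrac12\rho_B^{-3}t .
\]
Using \eqref{eq:ortho-1}, this yields
\[
\nabla_{\mathcal G}\rho_B=\rho_B^{-3}\bigl(|z|_B^2 Sz+\tfrac t4 Bz\bigr),\qquad
B^{-1}\nabla_{\mathcal G}\rho_B=\rho_B^{-3}\bigl(|z|_B^2B^{-1}Sz+\tfrac t4 z\bigr).
\]
The matrix $B^{-1}S$ is skew, being the product of a skew matrix and a commuting symmetric one. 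Hence $\langle z,B^{-1}Sz\rangle=0$, and therefore
\[
\langle z/|z|,B^{-1}\nabla_{\mathcal G}\rho_B\rangle=\tfrac{t|z|}{4\rho_B^3}\to0 \quad\text{as } |z|\to0 ,
\]
which is (d.3).

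\textbf{Computing $|Z|^2$.} Substitute into \eqref{eq:-definizione-di-Z_d}, using $m=Q-2$. The field splits into two orthogonal pieces, one along $z$ and one along $B^{-1}Sz$:
\[
Z_{\rho_B}=\frac{z}{m\rho_B}\Bigl(Q-p\theta s^2\Bigr)-\frac{4p\theta}{m}\frac{t|z|_B^2}{\rho_B^5}B^{-1}Sz,\qquad s:=\frac{t}{\rho_B^2}\in[-1,1].
\]
Since $B$ and $S$ commute and $S=\tfrac14|B|$, the singular values of $B^{-1}S$ all equal $1/4$, so $|B^{-1}Sz|=|z|/4$. Using $|z|_B^2/\rho_B^2=\sqrt{1-s^2}$, expanding the two squares gives
\[
|Z_{\rho_B}|^2=\frac{|z|^2}{m^2\rho_B^2}\Bigl[Q^2+p\theta(p\theta-2Q)s^2\Bigr].
\]
Next, $|z|^2\le \tfrac{4}{\lambda_{\min}(B)}|z|_B^2$, so $|z|^2/\rho_B^2\le \tfrac{4}{\lambda_{\min}(B)}\sqrt{1-s^2}$. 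Combining,
\[
\sup|Z_{\rho_B}|^2\le \frac{4}{\lambda_{\min}(B)(Q-2)^2}\max_{s\in[0,1]}\sqrt{1-s^2}\,\bigl(Q^2+as^2\bigr),\qquad a=p\theta(p\theta-2Q).
\]

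\textbf{The one-variable maximization.} This is the only genuinely computational step, and the case split of \eqref{eq:cdp-koranyi-generalized} comes out of it. Set $\sigma=s^2$ and $h(\sigma)=(1-\sigma)(Q^2+a\sigma)^2$ on $[0,1]$. Its interior critical point is $\sigma_*=(2a-Q^2)/(3a)$, and $\sigma_*\in(0,1]$ exactly when $a>Q^2/2$, i.e.\ $(p\theta-Q)^2>\tfrac32Q^2$.
\begin{itemize}
\item If $p\theta$ lies in the stated interval, then $a\le Q^2/2$, the maximum is at $\sigma=0$, and it equals $Q^2$. This gives $\sup|Z|^2\le 4Q^2/(\lambda_{\min}(B)(Q-2)^2)$, i.e.\ the first case.
\item Otherwise, evaluating $h(\sigma_*)$ gives
\[
\sqrt{h(\sigma_*)}=\frac{2(Q^2+a)^{3/2}}{3\sqrt{3a}}=\frac{2|p\theta-Q|^3}{3\sqrt{3a}}.
\]
Inserting this into Corollary~\ref{corr:lower-bound} and simplifying yields the second case.
\end{itemize}
The specialization $p=2$, $\theta=1$ falls into the first case and gives the final displayed bound. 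The main care needed is in the algebra of this last simplification and in keeping track of signs (for instance $a$ may be negative, in which case the first case applies automatically).
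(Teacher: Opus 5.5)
Your proposal is correct and follows essentially the paper's route: apply Corollary~\ref{corr:lower-bound} to $d=\rho_B$, reduce $|Z_{\rho_B}|^2$ to the one-variable function $\sqrt{1-s^2}\,\bigl(Q^2+p\theta(p\theta-2Q)s^2\bigr)/(Q-2)^2$ (this is the paper's $g(\lambda)$ with $s^2=\lambda^2/(1+\lambda^2)$), pay the factor $4/\lambda_{\min}(B)$ for comparing the Euclidean and symplectic norms, and maximize. The differences are minor: the paper bounds $|Z|\le \frac{2}{\sqrt{\lambda_{\min}(B)}}|Z|_B$ and computes $|Z|_B^2$, whereas you compute the Euclidean $|Z|^2$ exactly and compare only $|z|$ with $|z|_B$, which shows the step is lossless when $z$ lies in the $\lambda_{\min}(B)$-eigenspace; you also carry out explicitly the maximization that the paper leaves to the reader.
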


We conclude this work by considering step $2$ groups with more than one vertical direction.
We first obtain an analogue of Theorem~\ref{thm:-Hardy-lungo-Z_d} for the case of $N$ products of
$\mathbb{H}^n$ (see Theorem~\ref{thm:-ProH-Hardy-lungo-Z}), from which the following corollary follows.
We then extend the construction to general step $2$ Carnot groups with several vertical
directions in Theorem~\ref{thm:caso-carnot-generale}.

\begin{corollary}
Let \( p \geq 2 \), \( \theta \geq 0 \), and let \( \rho(z,t) \) be the Korányi norm on \( \mathcal{G} = (\mathbb{H}^n)^N \). Then, if
\[
n \geq \frac{1}{4}(p\theta - 4),
\]
the following Hardy-type inequality holds for all \( u \in C_c^\infty(\mathcal{G} \setminus \{0\}) \)
\begin{equation}
\label{eq:-ProH-hardy-non-pesata}
\int_{\mathcal{G}} \frac{|\nabla_{\mathcal{G}} u|^p}{\rho^{p(\theta-1)}} \, dz \, dt 
\geq \left( \frac{n}{n+1} \right)^p \left| \frac{Q - p\theta}{p} \right|^p \int_{\mathcal{G}} \frac{|u|^p}{\rho^{p\theta}} \, dz \, dt.
\end{equation}
\end{corollary}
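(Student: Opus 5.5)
The plan is to deduce the corollary from the product analogue of Theorem~\ref{thm:-Hardy-lungo-Z_d} (Theorem~\ref{thm:-ProH-Hardy-lungo-Z}), exactly as Corollary~\ref{corr:lower-bound} follows from Theorem~\ref{thm:-Hardy-lungo-Z_d}. On $(\mathbb H^n)^N$ we write points as $(z,t)=(z^1,\dots,z^N,t_1,\dots,t_N)$ with $z^j\in\mathbb R^{2n}$ and $B_j=4J$ on each factor. The Korányi norm is $\rho=(|z|^4+|t|^2)^{1/4}$ and $Q=N(2n+2)$. I expect Theorem~\ref{thm:-ProH-Hardy-lungo-Z} to provide a horizontal field $Z$ built factorwise as in \eqref{eq:-definizione-di-Z_d}, with $m$ replaced by the factor dimension $2n$:
\[
Z=\frac{n+1}{n}\,\frac{z}{\rho}-\frac{2p\theta}{n}\sum_{j}\frac{t_j}{\rho^2}\,B_j^{-1}\nabla_{z^j}\rho ,
\]
together with the inequality $\int|\langle\nabla_{\mathcal G}u,Z\rangle|^p\rho^{-p(\theta-1)}\ge|\tfrac{Q-p\theta}{p}|^p\int|u|^p\rho^{-p\theta}$. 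By Cauchy--Schwarz, $|\langle\nabla_{\mathcal G}u,Z\rangle|\le|\nabla_{\mathcal G}u|\,|Z|$. It therefore suffices to prove $\sup|Z|\le\frac{n+1}{n}$ under the hypothesis $n\ge\frac14(p\theta-4)$.

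The key step is the computation of $|Z|$. On each factor one has
\[
\nabla_{z^j}\rho=\rho^{-3}\bigl(|z|^2z^j+t_jJz^j\bigr),
\qquad
B_j^{-1}\nabla_{z^j}\rho=\tfrac14\rho^{-3}\bigl(t_jz^j-|z|^2Jz^j\bigr),
\]
up to the normalization fixed in Section~\ref{sec:heisenberg}. Substituting, the $j$-th block of $Z$ becomes
\[
Z^j=\Bigl(\frac{a}{\rho}-\frac{b\,t_j^2}{\rho^5}\Bigr)z^j+\frac{b\,|z|^2t_j}{\rho^5}\,Jz^j,
\qquad a=\frac{n+1}{n},\quad b=\frac{p\theta}{2n}.
\]
Since $Jz^j\perp z^j$ and $|Jz^j|=|z^j|$, we get $|Z^j|^2=\frac{|z^j|^2}{\rho^2}\bigl[(a-bs_j^2)^2+b^2r^4s_j^2\bigr]$, where $r=|z|/\rho$ and $s_j=t_j/\rho^2$.

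In the single-factor case $r^4+s^2=1$, and the bracket collapses to $a^2+s^2b(b-2a)$. In the product case one has $r^4+\sum_k s_k^2=1$, so $r^4\le 1-s_j^2$. For $\theta\ge0$ (so $b\ge0$) this gives $b^2r^4s_j^2\le b^2s_j^2-b^2s_j^4$, and hence the bracket is at most $a^2+s_j^2\,b(b-2a)$.

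The condition $b\le 2a$ reads $p\theta\le4(n+1)$, which is exactly $n\ge\frac14(p\theta-4)$. Under it the bracket is at most $a^2$. Summing over $j$ gives $|Z|^2\le a^2|z|^2/\rho^2\le a^2$, which yields \eqref{eq:-ProH-hardy-non-pesata} with constant $(n/(n+1))^p$.

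The main obstacle is making sure that the field supplied by Theorem~\ref{thm:-ProH-Hardy-lungo-Z} really has the factorwise form above, with coefficient $(n+1)/n$ rather than one involving the total horizontal dimension $2nN$. The target constant suggests the per-factor form, so I would verify this first by redoing the integration by parts of Theorem~\ref{thm:-Hardy-lungo-Z_d} separately in each pair $(z^j,t_j)$. The second point to check is the normalization constants in $\nabla\rho$ for $B=4J$, since the threshold $b\le2a$ depends on them.
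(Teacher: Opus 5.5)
Your proposal is correct and is essentially the paper's argument. The field supplied by Theorem~\ref{thm:-ProH-Hardy-lungo-Z} is exactly $Z_\rho=\frac{n+1}{n}\frac{z}{\rho}-\frac{p\theta}{2n}\rho^{-2}\sum_j t_j\nabla^{(j)\perp}\rho$, which agrees with your normalization since $B_j^{-1}\nabla^{(j)}\rho=\frac14\nabla^{(j)\perp}\rho$; the paper's expansion of $|Z_\rho|^2$ is term-by-term your bracket $a^2+b\,s_j^2\bigl(b(s_j^2+r^4)-2a\bigr)$, controlled via $|z|^4+t_j^2\le\rho^4$ to give $\sup|Z_\rho|=\frac{n+1}{n}$ when $0\le p\theta\le 4(n+1)$, followed by the same Cauchy--Schwarz step.
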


\subsection{Structure of the paper}
Section~\ref{sec:preliminaries} introduces the notation and collects some preliminary results.
Section~\ref{sec:proof-theorem} first establishes the identity relating the vector fields \( Z_d \) and \( \mathcal{E} \) (Proposition~\ref{prop:-identità-tra-Z-e-E}), which plays a fundamental role in the argument, and then proves Theorem~\ref{thm:-Hardy-lungo-Z_d}.

Section~\ref{sec:unweighted-hardy-inequality} is devoted to explicit
unweighted Hardy inequalities.
We begin with the Heisenberg group, comparing the Korányi and
Carnot Carathéodory norms (Section~\ref{sec:heisenberg}), and then
consider a non-isotropic example
(Section~\ref{sec:non-isotropic-case}).
We next turn to groups with more than one vertical direction, treating
the case of $N$ products of $\mathbb{H}^n$
(Section~\ref{sec:product-heisnebrg}) and subsequently the general
step two setting.

\section{Preliminaries}\label{sec:preliminaries}

Let us consider \(\mathcal{G} = (\mathbb{R}^m \times \mathbb{R}, \circ)\), $m\in\mathbb N$, to be a step $2$ Carnot group with $1$-dimensional vertical direction satisfying assumption \eqref{eq:H0}.

We start by considering an appropriate change of coordinates, which simplifies computations in the following.
Since $B$ is skew-symmetric and invertible, the group $\mathcal G$ is even-dimensional (i.e., $m=2n$ for $n\in \mathbb{N}$) and there exists $\lambda_1,\ldots,\lambda_n\in\mathbb{R}\setminus\{0\}$ and an orthogonal matrix $Q$ such that
\begin{equation}\label{eq:-Definizione-matrice-B-diagonalizzata}
    B = Q\Lambda Q^\top, 
    \qquad\text{where}\qquad
    \Lambda = 
\begin{pmatrix}
\begin{matrix}
0 & \lambda_1 \\
-\lambda_1 & 0
\end{matrix} & & & \\
& \ddots & & \\
& & 
\begin{matrix}
0 & \lambda_n \\
-\lambda_n & 0
\end{matrix} 
\end{pmatrix}
\in \mathbb{R}^{m \times m}.
\end{equation}
Up to an orthogonal change of coordinates, we may and shall assume that, for all  $i$,
\[
\lambda_i>0 .
\]
Then, performing the orthogonal change of variables $z\mapsto Q^\top z$, we reduce to the case $B=\Lambda$.


In these coordinates, the horizontal layer is spanned by the left-invariant vector fields
\[
X_{2i-1}=\partial_{z_{2i-1}}+\frac{\lambda_i}{2}\, z_{2i}\,\partial_t,
\qquad
X_{2i}=\partial_{z_{2i}}-\frac{\lambda_i}{2}\, z_{2i-1}\,\partial_t,
\qquad i=1,\dots,n,
\]
and the vertical direction is \(T=\partial_t\).
The only nontrivial commutators are
\[
[X_{2i},X_{2i-1}]=\lambda_i\,T,\qquad i=1,\dots,n.
\]

For later use we set
\begin{gather}
    \label{eq:Definizione-nabla-2i-1-2i}
\nabla_{2i-1,2i}u=(0,\dots,0,X_{2i-1}u,X_{2i}u,0,\dots,0),\\
\nabla_{2i-1,2i}^\perp u=(0,\dots,0,-X_{2i}u,X_{2i-1}u,0,\dots,0),
\end{gather}
and denote
\[
\mathbf z^{(i)}=(0,\dots,0,z_{2i-1},z_{2i},0,\dots,0),\qquad
\mathbf z^{(i),\perp}=(0,\dots,0,-z_{2i},z_{2i-1},0,\dots,0).
\]


The next lemmas collect a few auxiliary identities that will be used instrumental to the proof of Theorem~\ref{thm:-Hardy-lungo-Z_d}.

\begin{lemma}\label{lemma:-ricostruzione_d}
Let $d$ be a regular positive homogeneous function on $\mathcal G$ and assume
\[
\langle z , B^{-1}\nabla_z d\rangle = 0 .
\]
Then, on $\mathcal G\setminus L$,
\[
4 \,\frac{t}{|z|^2} \left\langle B^{-1}\nabla_{\mathcal{G}} d , 
z\right\rangle
\;+\; 
\langle z , \nabla_{\mathcal G} d \rangle
= d .
\]
\end{lemma}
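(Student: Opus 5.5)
The identity is a rewriting of Euler's identity for the homogeneous function $d$. The plan is to express both terms on the left-hand side through the Euclidean derivatives $\nabla_z d$ and $\partial_t d$, and to use the skew-symmetry of $B$ together with the hypothesis $\langle z,B^{-1}\nabla_z d\rangle=0$ to discard every cross term.

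First, on $\mathcal G\setminus L$ the function $d$ is smooth by (d.1). We differentiate $d(\gamma z,\gamma^2 t)=\gamma\, d(z,t)$ with respect to $\gamma$ at $\gamma=1$, which gives Euler's identity
\[
\langle z,\nabla_z d\rangle + 2t\,\partial_t d = d .
\]
Next, by \eqref{eq:ortho-1} the horizontal gradient is $\nabla_{\mathcal G} d=\nabla_z d+\tfrac12 (Bz)\,\partial_t d$. Since $B$ is skew-symmetric, $\langle z,Bz\rangle=0$, and therefore
\[
\langle z,\nabla_{\mathcal G} d\rangle=\langle z,\nabla_z d\rangle .
\]

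Similarly, since $B$ is invertible by \eqref{eq:H0},
\[
B^{-1}\nabla_{\mathcal G} d = B^{-1}\nabla_z d+\tfrac12\, z\,\partial_t d .
\]
Pairing with $z$ and using the assumption $\langle z,B^{-1}\nabla_z d\rangle=0$, we get
\[
\langle B^{-1}\nabla_{\mathcal G} d,z\rangle=\tfrac12|z|^2\partial_t d .
\]
Multiplying by $4t/|z|^2$, which is legitimate because $|z|\neq0$ off $L$, turns the first term on the left-hand side into $2t\,\partial_t d$. Adding the second term, $\langle z,\nabla_z d\rangle$, and invoking Euler's identity yields $d$, as claimed.

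There is no real obstacle here. The only points requiring care are that the computation is carried out on $\mathcal G\setminus L$, where $d$ is differentiable and the division by $|z|^2$ is allowed, and that the vertical contributions vanish precisely because $\langle z,Bz\rangle=0$ and because of the hypothesis on $B^{-1}\nabla_z d$.
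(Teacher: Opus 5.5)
Your proof is correct and follows the same route as the paper: Euler's identity $\mathcal E d = d$, together with the expansion $\langle B^{-1}\nabla_{\mathcal G} d, z\rangle = \langle z, B^{-1}\nabla_z d\rangle + \tfrac12|z|^2\partial_t d$, whose first term is removed by the hypothesis. The differences are only cosmetic. You derive that expansion coordinate-free via $B^{-1}Bz = z$ rather than in the block-diagonal coordinates. You also explicitly justify $\langle z,\nabla_{\mathcal G} d\rangle = \langle z,\nabla_z d\rangle$ through $\langle z,Bz\rangle=0$, which the paper uses tacitly when writing Euler's identity with $\nabla_{\mathcal G}$.
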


\begin{proof}
The homogeneity of $d$ yields $\mathcal{E}d = d$, i.e.,
\begin{equation}
\langle z,\nabla_{\mathcal G} d\rangle + 2t\,\partial_t d = d.
\label{eq:homogeneity-d}
\end{equation}
Computations via the explicit form of the vector fields give
\[
\begin{split}
\left\langle B^{-1}\nabla_{\mathcal{G}} d , z \right\rangle
&=\sum_{i=1}^n \frac{1}{\lambda_i}\, \langle \mathbf{z}^{(i)} ,\nabla_{2i-1,2i}^\perp d\rangle\\
&= \sum_{i=1}^n \frac{1}{\lambda_i}\bigl( z_{2i} X_{2i-1} d - z_{2i-1} X_{2i} d \bigr)\\
&=\sum_{i=1}^n \frac{1}{\lambda_i}
\left( 
z_{2i}\,\frac{\partial d}{\partial z_{2i-1}}
- z_{2i-1}\,\frac{\partial d}{\partial z_{2i}}
\right)
+ 
\frac12 
\sum_{i=1}^n (z_{2i}^2+z_{2i-1}^2)\,\partial_t d\\
&= \langle z, B^{-1}\nabla_z d \rangle  + \frac{|z|^2}{2}\,\partial_t d.
\end{split}
\]
The first term on the r.h.s. vanishes by the assumption 
$\langle z , B^{-1}\nabla_z d\rangle = 0$. Hence, multiplying the above by $4t/|z|^2$ gives
\begin{equation}
4\,\frac{t}{|z|^2}\, \left\langle
B^{-1}\nabla_{\mathcal{G}} d , 
z \right\rangle
=
2t\,\partial_t d.
\label{eq:lemma2-vertical}
\end{equation}
Substituting \eqref{eq:lemma2-vertical} into \eqref{eq:homogeneity-d}
yields the identity.
\end{proof}

\begin{lemma}\label{lemma:-div^perp_casoRegolare}
Let $\Omega \subseteq \mathcal G$ be open and let $\varphi \in C^\infty(\Omega)$.  
Then, for every $i=1,\dots,n$,
\[
\Div_{\mathcal G}\bigl(\nabla_{2i-1,2i}^{\perp}\varphi\bigr)
= \lambda_i \,\partial_t \varphi .
\]
\end{lemma}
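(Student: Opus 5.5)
The identity follows from writing the horizontal divergence in the frame $X_1,\dots,X_m$ and recognizing a commutator. Throughout, for a horizontal vector field $V=(V_1,\dots,V_m)$ in this frame, meaning the vector field $\sum_j V_j X_j$, I take
\[
\Div_{\mathcal G} V=\sum_{j=1}^m X_j V_j .
\]
This is the formal adjoint of $-\nabla_{\mathcal G}$ with respect to Lebesgue measure $dz\,dt$.

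The first step checks that this coincides with the Euclidean divergence of $\sum_j V_jX_j$ viewed as a field on $\mathbb R^{m+1}$. It suffices that each $X_j$ is divergence-free with respect to Lebesgue measure. In the diagonalized coordinates, $X_{2i-1}=\partial_{z_{2i-1}}+\frac{\lambda_i}{2}z_{2i}\partial_t$. Its $\partial_t$-coefficient does not depend on $t$, and its $\partial_{z_{2i-1}}$-coefficient is constant. Hence its Euclidean divergence is $0$, and the same holds for $X_{2i}$. (More generally $\partial_t\bigl(\tfrac12 (Bz)_j\bigr)=0$.) Therefore $\operatorname{div}_{\mathrm{Eucl}}\bigl(\sum_j V_jX_j\bigr)=\sum_j X_jV_j$, so whichever definition of $\Div_{\mathcal G}$ the paper uses, the formula above holds.

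Next I apply this to $V=\nabla^\perp_{2i-1,2i}\varphi$. Its only nonzero components are $V_{2i-1}=-X_{2i}\varphi$ and $V_{2i}=X_{2i-1}\varphi$. Thus
\[
\Div_{\mathcal G}\bigl(\nabla^\perp_{2i-1,2i}\varphi\bigr)
=-X_{2i-1}X_{2i}\varphi+X_{2i}X_{2i-1}\varphi=[X_{2i},X_{2i-1}]\varphi .
\]
Here $\varphi\in C^\infty(\Omega)$ guarantees that all second derivatives exist and the computation is pointwise on $\Omega$.

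Finally, the commutator relation recorded before the lemma, $[X_{2i},X_{2i-1}]=\lambda_i T$ with $T=\partial_t$, gives $\lambda_i\partial_t\varphi$. It can also be checked directly:
\[
[X_{2i},X_{2i-1}]=\partial_{z_{2i}}\Bigl(\tfrac{\lambda_i}{2}z_{2i}\Bigr)\partial_t-\partial_{z_{2i-1}}\Bigl(-\tfrac{\lambda_i}{2}z_{2i-1}\Bigr)\partial_t=\lambda_i\partial_t .
\]

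The only point needing care is the first step: fixing the convention for $\Div_{\mathcal G}$ and confirming that the $X_j$ carry no zeroth-order divergence term. Once that is settled, the rest is the two-line commutator computation above.
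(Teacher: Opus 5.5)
Your proof is correct and follows the same route as the paper. Both expand the horizontal divergence as $\sum_j X_jV_j$, recognize $-X_{2i-1}X_{2i}\varphi+X_{2i}X_{2i-1}\varphi=[X_{2i},X_{2i-1}]\varphi$, and conclude with $[X_{2i},X_{2i-1}]=\lambda_i\partial_t$. Your extra check that each $X_j$ is divergence-free with respect to Lebesgue measure does not appear in the paper; it harmlessly confirms that this convention matches the Euclidean divergence and the integration by parts used later.
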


\begin{proof}
Using the definition of the horizontal divergence,
\[
\Div_{\mathcal G}\bigl(\nabla_{2i-1,2i}^{\perp}\varphi\bigr)
= X_{2i-1}(-X_{2i}\varphi)+X_{2i}(X_{2i-1}\varphi)
= [X_{2i},X_{2i-1}]\,\varphi .
\]
Since $[X_{2i},X_{2i-1}] = \lambda_i\,\partial_t$, the conclusion follows.
\end{proof}




\begin{lemma}\label{lemma:div-dist}
Let $d$ be a regular positive homogeneous function on $\mathcal G$.  
Then, in the sense of distributions on $\mathcal G\setminus\{0\}$, the following identities hold:

\begin{itemize}
\item[\textnormal{(i)}]
\(\displaystyle
\Div_{\mathcal G}\!\left(
\frac{t}{d^{p\theta+1}}\,
B^{-1}\nabla_{\mathcal G}d
\right)
=
-\frac12\,\frac{\langle z,\nabla_{\mathcal G} d\rangle}{d^{p\theta+1}}
\;+\;
n\,\frac{t}{d^{p\theta+1}}\,\partial_t d ,
\)

\item[\textnormal{(ii)}]
\(\displaystyle
\Div_{\mathcal G}\!\left(\frac{z}{d^{p\theta}}\right)
=
\frac{2n}{d^{p\theta}}
-
\frac{p\theta}{d^{p\theta+1}}\, \, \langle z,\nabla_{\mathcal G} d \rangle.
\)
\end{itemize}
\end{lemma}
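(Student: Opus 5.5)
The plan is to verify both identities pointwise on $\mathcal G\setminus L$, where $d$ is smooth by (d.1), and then to extend them across the center $L\setminus\{0\}$ by a cutoff argument. The extension works because $L$ has codimension $m\ge 2$ and both vector fields are locally bounded away from the origin. As in the rest of this section, we work in the coordinates where $B=\Lambda$.

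\emph{Pointwise computation of} (ii). On $\mathcal G\setminus L$ we use the Leibniz rule $\Div_{\mathcal G}(fV)=f\,\Div_{\mathcal G}V+\langle\nabla_{\mathcal G}f,V\rangle$. We have $X_i z_j=\delta_{ij}$, since the $\partial_t$ part of $X_i$ does not act on $z$. Hence $\Div_{\mathcal G} z=m=2n$. Moreover $\nabla_{\mathcal G}(d^{-p\theta})=-p\theta\, d^{-p\theta-1}\nabla_{\mathcal G}d$, and pairing this with $z$ gives (ii) immediately.

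\emph{Pointwise computation of} (i). On the $i$-th block, $\Lambda^{-1}$ acts as $\frac1{\lambda_i}\begin{pmatrix}0&-1\\1&0\end{pmatrix}$. Therefore $B^{-1}\nabla_{\mathcal G}d=\sum_i \lambda_i^{-1}\nabla^\perp_{2i-1,2i}d$, and Lemma~\ref{lemma:-div^perp_casoRegolare} yields $\Div_{\mathcal G}(B^{-1}\nabla_{\mathcal G}d)=n\,\partial_t d$.

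Next we take $f=t\,d^{-p\theta-1}$ and split $\nabla_{\mathcal G}f$ into two pieces.

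The first piece, $-(p\theta+1)t\,d^{-p\theta-2}\nabla_{\mathcal G}d$, pairs to zero with $B^{-1}\nabla_{\mathcal G}d$, because $B^{-1}$ is skew-symmetric.

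The second piece is $d^{-p\theta-1}\nabla_{\mathcal G}t$. Here $X_{2i-1}t=\frac{\lambda_i}{2}z_{2i}$ and $X_{2i}t=-\frac{\lambda_i}{2}z_{2i-1}$. A direct computation gives
\[
\langle\nabla_{\mathcal G}t,\lambda_i^{-1}\nabla^\perp_{2i-1,2i}d\rangle=-\tfrac12\bigl(z_{2i-1}X_{2i-1}d+z_{2i}X_{2i}d\bigr).
\]
Summing over $i$ produces $-\frac12\langle z,\nabla_{\mathcal G}d\rangle\,d^{-p\theta-1}$. Combined with the term $n\,t\,d^{-p\theta-1}\partial_t d$ from the divergence of $B^{-1}\nabla_{\mathcal G}d$, this gives (i) on $\mathcal G\setminus L$.

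\emph{Extension across} $L$. This is the only delicate point, and we handle it with a capacity-type argument. Fix $\varphi\in C^\infty_c(\mathcal G\setminus\{0\})$ and a cutoff $\chi_\varepsilon(z)=\chi(|z|/\varepsilon)$, with $\chi=0$ on $[0,1]$ and $\chi=1$ on $[2,\infty)$. Since $\chi_\varepsilon\varphi$ is supported in $\mathcal G\setminus L$, the pointwise identities integrate against it. The error term is
\[
\int \varphi\,\langle V,\nabla_{\mathcal G}\chi_\varepsilon\rangle ,
\]
where $V$ is either field. On $\operatorname{supp}\varphi$ the function $d$ is bounded below by a positive constant, and (d.2) gives local boundedness of $\nabla_{\mathcal G}d$ and $\partial_t d$. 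Hence both fields $V$ and both right-hand sides are bounded on $\operatorname{supp}\varphi$, in particular $L^1_{\rm loc}$. Since $|\nabla_{\mathcal G}\chi_\varepsilon|=|\nabla_z\chi_\varepsilon|\lesssim\varepsilon^{-1}$ on a set $\{\varepsilon<|z|<2\varepsilon\}$ whose intersection with $\operatorname{supp}\varphi$ has measure $O(\varepsilon^m)$, the error is $O(\varepsilon^{m-1})\to0$ because $m\ge2$. Dominated convergence handles the remaining terms, which yields the distributional identities on $\mathcal G\setminus\{0\}$. Condition (d.3) is not needed for this lemma.
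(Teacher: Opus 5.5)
Your proof is correct. The pointwise part on $\mathcal G\setminus L$ is the paper's computation in slightly different packaging. The paper computes $\nabla_{2i-1,2i}(t\,d^{-p\theta-1})$ block by block and applies Lemma~\ref{lemma:-div^perp_casoRegolare}. That amounts to your Leibniz rule plus $\Div_{\mathcal G}(B^{-1}\nabla_{\mathcal G}d)=n\,\partial_t d$, with the $\nabla_{\mathcal G}d$-part of $\nabla_{\mathcal G}(t\,d^{-p\theta-1})$ dropping out by skew-symmetry.

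You genuinely differ in the passage across the center $L$:
\begin{itemize}
\item The paper truncates sharply to $D_\varepsilon=\{|z|>\varepsilon\}$ and integrates by parts. It then disposes of the flux term on $\{|z|=\varepsilon\}$ by appealing to (d.3).
\item You use a smooth cutoff $\chi(|z|/\varepsilon)$ and bound the cutoff error by $\|V\|_{L^\infty(\operatorname{supp}\varphi)}\,\varepsilon^{-1}\,|\{\varepsilon<|z|<2\varepsilon\}\cap\operatorname{supp}\varphi|=O(\varepsilon^{m-1})$. This needs only (d.2) and $m=2n\ge 2$.
\end{itemize}
Your route also treats (i) and (ii) uniformly, whereas the paper only asserts that (ii) is similar.

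Your claim that (d.3) is not needed is right. Even in the paper's formulation, the flux integrand is bounded by (d.2), while $\{|z|=\varepsilon\}\cap\operatorname{supp}\varphi$ has $2n$-dimensional measure $O(\varepsilon^{2n-1})$. So the boundary term vanishes without (d.3). This lemma is the only place where (d.3) enters the proof of Theorem~\ref{thm:-Hardy-lungo-Z_d}. Your argument therefore shows that this hypothesis could be dropped from the definition of regularity, as far as that theorem is concerned.
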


\begin{proof}
A direct computation gives
\[
\nabla_{2i-1,2i}\!\left(\frac{t}{d^{p\theta+1}}\right)
=
-\frac{\lambda_i}{2}\,\mathbf z^{(i),\perp}
-
(p\theta+1)\,\frac{t}{d^{p\theta+2}}\,
\nabla_{2i-1,2i} d .
\]
    Hence, using Lemma~\ref{lemma:-div^perp_casoRegolare}, we obtain that
    \begin{equation}
        \Div_{\mathcal G}\!\left(
        \frac{t}{d^{p\theta+1}}
        \,\nabla^{\perp}_{2i-1,2i}d
        \right)
        =
        -\frac{\lambda_i}{2}\,
        \left\langle\frac{\mathbf z^{(i)}}{d^{p\theta+1}},
         \nabla_{2i-1,2i}d \right\rangle
        +
        \lambda_i\,\frac{t}{d^{p\theta+1}}\,\partial_t d .
    \end{equation}
Dividing by $\lambda_i$ and summing over $i$, we obtain \textnormal{(i)} on $\mathcal G\setminus L$.

To complete the proof, fix $\varepsilon>0$ and set \(D_\varepsilon=\{(z,t)\in\mathcal G : |z|>\varepsilon\}.\)
For \(\phi\in C_c^\infty(\mathcal G\setminus\{0\})\),
\begin{multline*}
    \int_{D_\varepsilon} \Div_{\mathcal G}\left( \frac{t}{d^{p\theta+1}} B^{-1}\nabla_{\mathcal G} d \right) \phi \, dz\,dt\\
    =
    -\!\int_{D_\varepsilon}
    \frac{t}{d^{p\theta+1}}\, \langle
    B^{-1}\nabla_{\mathcal G}d , \nabla_{\mathcal G}\phi\rangle\, dz\,dt
    +
    \int_{|z|=\varepsilon}
    \frac{t}{d^{p\theta+1}}\,\phi\, \left\langle
    B^{-1}\nabla_{\mathcal G}d,\frac{z}{|z|} \right\rangle
    \, dH_{2n}.
\end{multline*}
Observe that the boundary integral on the r.h.s. vanishes as \(\varepsilon\to0^+\) by assumption \emph{(d.3)}, since $\phi$ is compactly supported away from zero.
Hence, since $D_\varepsilon\subset \mathcal G\setminus L$, applying identity \textnormal{(i)} on the l.h.s.~and letting \(\varepsilon\to0^+\), we obtain \textnormal{(i)} in the sense of distributions on \(\mathcal G\setminus\{0\}\).

The identity in \textnormal{(ii)} is proved in a similar way.
\end{proof}






\section{Proof of Theorem~\ref{thm:-Hardy-lungo-Z_d}}
\label{sec:proof-theorem}

In this section, we prove Theorem~\ref{thm:-Hardy-lungo-Z_d}, whose statement we recall for the reader's convenience using the notations \eqref{eq:Definizione-nabla-2i-1-2i}.

\begin{theorem*}
Let \( p \geq 2 \), \( \theta \in \mathbb{R} \), and let \( d \) be a regular positive homogeneous function on \( \mathcal{G} \). Then, for every \( u \in C_c^\infty(\mathcal{G} \setminus \{0\}) \), the following Hardy-type inequality holds
\begin{equation}
\int_{\mathcal{G}} \frac{|\left\langle \nabla_{\mathcal{G}} u , Z_d \right\rangle|^p}{d^{p(\theta - 1)}} \, dz\,dt 
\geq \left| \frac{Q - p\theta}{p} \right|^p \int_{\mathcal{G}} \frac{|u|^p}{d^{p\theta}} \, dz\,dt.
\end{equation}
Here, the vector field $Z_d$ is defined by
\[
Z_d
=
\frac{n+1}{n}\,\frac{z}{d}
-
\frac{2p\theta}{n}\,\frac{t}{d^2}
\sum_{i=1}^n \frac{1}{\lambda_i}\,\nabla_{2i-1,2i}^{\perp} d.
\]

%
If, additionally, \( d \) is such that $\langle z,B^{-1}\nabla_z d\rangle=0$ for all $z\in \mathcal G\setminus\{0\}$, 
then inequality \eqref{eq:-HardyZ_d} is sharp, and equality is attained by 
\[
u(z, t) = \left( \frac{|t|}{|z|^2} \right)^{\frac{Q - 2}{2p}}.
\]
\end{theorem*}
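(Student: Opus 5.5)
The inequality will follow from an integration-by-parts identity that trades the Euler field $\mathcal E=\langle z,\nabla_z\rangle+2t\partial_t$ for the horizontal field $Z_d$, followed by Hölder's inequality. Fix $u\in C^\infty_c(\mathcal G\setminus\{0\})$ and set $w=|u|^p$. Since $p\ge2$, $w\in C^1_c(\mathcal G\setminus\{0\})$ and $\nabla_{\mathcal G}w=p|u|^{p-2}u\nabla_{\mathcal G}u$. The target identity is
\[
\int_{\mathcal G}\frac{\langle\nabla_{\mathcal G}w,Z_d\rangle}{d^{p\theta-1}}\,dz\,dt=\int_{\mathcal G}\frac{\mathcal E w}{d^{p\theta}}\,dz\,dt=-(Q-p\theta)\int_{\mathcal G}\frac{w}{d^{p\theta}}\,dz\,dt .
\]

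\emph{The second equality.} Integrate by parts in the Euclidean sense, using $\operatorname{div}\mathcal E=m+2=Q$ and $\mathcal E(d^{-p\theta})=-p\theta\,d^{-p\theta}$. The latter comes from the homogeneity relation $\mathcal E d=d$, which is \eqref{eq:homogeneity-d}.

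\emph{The first equality.} After horizontal integration by parts it reduces to showing that
\[
\Div_{\mathcal G}\bigl(d^{1-p\theta}Z_d\bigr)=(Q-p\theta)\,d^{-p\theta}
\]
in the sense of distributions on $\mathcal G\setminus\{0\}$. Two facts enter here.
\begin{itemize}
\item In the block coordinates one has $B^{-1}\nabla_{\mathcal G}d=\sum_i\lambda_i^{-1}\nabla^\perp_{2i-1,2i}d$, up to the sign convention fixed in Lemma~\ref{lemma:-ricostruzione_d}.
\item Hence $d^{1-p\theta}Z_d=\frac{n+1}{n}\frac{z}{d^{p\theta}}-\frac{2p\theta}{n}\frac{t}{d^{p\theta+1}}B^{-1}\nabla_{\mathcal G}d$.
\end{itemize}
Apply Lemma~\ref{lemma:div-dist}(i),(ii) to the two terms and sum. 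The coefficient of $\langle z,\nabla_{\mathcal G}d\rangle d^{-p\theta-1}$ is $-p\theta\frac{n+1}{n}+\frac{p\theta}{n}=-p\theta$. The coefficient of $t\partial_t d\, d^{-p\theta-1}$ is $-2p\theta$, and the constant term is $2(n+1)d^{-p\theta}=Q\,d^{-p\theta}$. Then $\mathcal E d=d$ collapses everything to $(Q-p\theta)d^{-p\theta}$. This is precisely why the coefficients $\frac{n+1}{n}$ and $\frac{2p\theta}{n}$ were chosen.

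The distributional validity across the center $L$ is already handled by Lemma~\ref{lemma:div-dist} through (d.3). Using it is legitimate because $w$ is only $C^1$ and compactly supported away from $0$, and the relevant fields are in $L^\infty_{\rm loc}$ by (d.2); a mollification of $w$ settles this if needed.

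\emph{Hölder step.} Dividing the identity by $p$ gives
\[
\Bigl|\tfrac{Q-p\theta}{p}\Bigr|\int\frac{|u|^p}{d^{p\theta}}=\Bigl|\int\frac{|u|^{p-2}u\,\langle\nabla_{\mathcal G}u,Z_d\rangle}{d^{p\theta-1}}\Bigr| .
\]
Split the integrand as $\bigl(|u|^{p-1}d^{-(p-1)\theta}\bigr)\cdot\bigl(|\langle\nabla_{\mathcal G}u,Z_d\rangle|\,d^{-(\theta-1)}\bigr)$ and apply Hölder with exponents $p/(p-1)$ and $p$. This bounds the right-hand side by
\[
\Bigl(\int\frac{|u|^p}{d^{p\theta}}\Bigr)^{(p-1)/p}\Bigl(\int\frac{|\langle\nabla_{\mathcal G}u,Z_d\rangle|^p}{d^{p(\theta-1)}}\Bigr)^{1/p}.
\]
Dividing by the first factor, which is finite and may be assumed positive, and raising to the $p$-th power gives \eqref{eq:-HardyZ_d}.

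\emph{Sharpness; the main obstacle.} Equality in the chain requires $\langle\nabla_{\mathcal G}u,Z_d\rangle=-\frac{Q-p\theta}{p}\cdot\frac{u}{d}$. I would first verify that $u_*=(|t|/|z|^2)^{\alpha}$ with $\alpha=\frac{Q-2}{2p}$ satisfies the analogous pointwise relation, computing separately:
\begin{itemize}
\item $\langle z,\nabla_{\mathcal G}u_*\rangle=-2\alpha u_*$, since $\langle z,Bz\rangle=0$ kills the $\partial_t$ contribution;
\item $\langle B^{-1}\nabla_{\mathcal G}d,\nabla_{\mathcal G}u_*\rangle$, via the hypothesis $\langle z,B^{-1}\nabla_z d\rangle=0$ and the reconstruction formula of Lemma~\ref{lemma:-ricostruzione_d}. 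These should combine with $\mathcal E d=d$ into a multiple of $u_*/d$.
\end{itemize}
Since $u_*$ is $\delta$-homogeneous of degree $0$ and singular on $L$ and $\{t=0\}$, both sides of \eqref{eq:-HardyZ_d} diverge for $u_*$. So sharpness must be read as optimality of the constant: I would truncate $u_*$ by a logarithmic cutoff in $d$ and smooth it near $L\cup\{t=0\}$. Then I would show that the ratio of the two sides tends to $|\frac{Q-p\theta}{p}|^p$, the cutoff errors being lower order relative to the divergent main terms. Matching the exponent $\alpha$ with the constant $\frac{Q-p\theta}{p}$ in this computation, and controlling the cutoff near the degenerate sets, is where I expect the real difficulty.
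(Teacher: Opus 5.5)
\paragraph{The inequality.} Your argument here is correct and matches the paper's up to the final step. The paper's key identity linking $Z_d$ and $\mathcal E$ rests on two ingredients, both of which you have. The first is the distributional identity $\Div_{\mathcal G}(d^{1-p\theta}Z_d)=(Q-p\theta)\,d^{-p\theta}$, obtained from Lemma~\ref{lemma:div-dist} and $\mathcal E d=d$ with exactly your coefficient count. The second is the Euclidean integration by parts for $\mathcal E$.

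The difference is how you close. You use H\"older. The paper instead inserts $f=\langle\nabla_{\mathcal G}u,Z_d\rangle\,d^{1-\theta}$ and $g=-\frac{Q-p\theta}{p}\,u\,d^{-\theta}$ into the algebraic $L^p$ identity of \cite{DA}, which writes the deficit as $\|w(p,f,g)(f-g)\|_{L^2}^2$. Your route is shorter and works for every $p>1$. The paper's route gives an explicit nonnegative remainder, so equality is visibly equivalent to $f=g$. You recover the same condition from the equality cases in H\"older and in $|\int\cdot|\le\int|\cdot|$.

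\paragraph{Sharpness.} This part has a genuine gap: your truncation acts on the wrong variable.

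\emph{A cutoff in $d$ cannot work.} Since $u_*$ is $\delta_\gamma$-homogeneous of degree $0$, take any dilation-invariant smoothing $\chi(\lambda)$ with $\lambda=t/|z|^2$. Then $\int_{\{R<d<2R\}}|u_*\chi|^p d^{-p\theta}\,dz\,dt=c\,R^{Q-p\theta}$. The radial measure is $r^{Q-p\theta-1}dr$, which is logarithmically critical only when $p\theta=Q$, and there the constant vanishes. So a logarithmic cutoff in $d$ produces no slowly divergent main term that dominates its own error.

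\emph{A fixed-scale smoothing near $L$ also fails.} The error term $u_*\chi'(\lambda)\langle\nabla_{\mathcal G}\lambda,Z_d\rangle=\frac{p\theta-Q}{n}\,\lambda\chi'(\lambda)\,u_*/d$ has the same homogeneity as the main term $-\frac{Q-p\theta}{p}\,u_*\chi/d$. Hence the ratio cannot tend to $|\frac{Q-p\theta}{p}|^p$.

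\emph{Where the divergence actually is.} The divergence of $u_*$ sits on the center $L$. In the coordinates $\Phi(\omega,\lambda)$ one has $|u_*|^p=\lambda^n$, and the Jacobian gives the density $\lambda^n(1+\lambda^2)^{-(n+1)/2}\sim\lambda^{-1}$ as $\lambda\to\infty$.

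\emph{The paper's construction.} The paper therefore truncates in $\lambda$ and uses a fixed bump in $d$: $u_\varepsilon=\lambda^{(Q-2)/(2p)}g_\varepsilon(\lambda)\eta(d)$, where $g_\varepsilon$ switches on over $[\varepsilon,2\varepsilon]$ and off over $[1/(2\varepsilon),1/\varepsilon]$. The main term is then at least $c\log(1/\varepsilon)$, while every error term stays $O(1)$:
\begin{itemize}
\item the $g_\varepsilon'$ terms, because $|\lambda g_\varepsilon'(\lambda)|\le c$ on sets of bounded $d\lambda/\lambda$-measure;
\item the $\eta'(d)$ term, because $\langle\nabla_{\mathcal G}d,B^{-1}\nabla_{\mathcal G}d\rangle=0$ leaves only $\frac{n+1}{n}\eta'(d)\langle z,\nabla_{\mathcal G}d\rangle/d=O(|z|)=O(\lambda^{-1/2})$ on the support.
\end{itemize}
Your sketch is missing this logarithmic divergence along $L$ and these two error bounds.
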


We start with the following, which links the horizontal vector field \(Z_d\) with the non-horizontal Euler vector field \(\mathcal{E}\).

\begin{proposition}\label{prop:-identità-tra-Z-e-E}
Let \(p \geq 2\), \(\theta \in \mathbb{R}\), and let \( d \) be a regular positive homogeneous function on \( \mathcal{G} \). Then, for every \(u \in C_c^\infty(\mathcal{G} \setminus \{0\})\),
\begin{equation}\label{eq:-equazione-identità-Lp-tra-Z-e-E}
\int_{\mathcal{G}} \frac{|u|^{p-2} u}{d^{p\theta - 1}}\, 
\langle\nabla_{\mathcal{G}} u , Z_d \rangle\, dz\, dt
=
\int_{\mathcal{G}} \frac{|u|^{p-2} u\, \mathcal{E} u}{d^{p\theta}} \, dz\, dt
=
-\frac{Q - p\theta}{p} 
\int_{\mathcal{G}} \frac{|u|^p}{d^{p\theta}} \, dz\, dt.
\end{equation}
\end{proposition}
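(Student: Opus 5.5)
The plan is to prove both equalities by showing that each side equals the same multiple of $\int_{\mathcal G}|u|^p d^{-p\theta}$. The main tool is the chain rule
\[
|u|^{p-2}u\,\nabla_{\mathcal G}u=\tfrac1p\nabla_{\mathcal G}|u|^p,
\qquad
|u|^{p-2}u\,\mathcal E u=\tfrac1p\mathcal E|u|^p,
\]
valid since $p\ge 2$ makes $|u|^p$ a $C^1$ function, compactly supported in $\mathcal G\setminus\{0\}$. Integration by parts then moves the derivative onto the weights.

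\emph{Second equality.} We have
\[
\int_{\mathcal G}\frac{|u|^{p-2}u\,\mathcal E u}{d^{p\theta}}=\frac1p\int_{\mathcal G}\frac{\mathcal E(|u|^p)}{d^{p\theta}}.
\]
The Euclidean divergence of $\mathcal E=\langle z,\nabla_z\rangle+2t\partial_t$ is $m+2=Q$. By homogeneity, $\mathcal E d=d$, so $\mathcal E(d^{-p\theta})=-p\theta\, d^{-p\theta}$. Integrating by parts in $\mathbb R^{m+1}$ gives
\[
\frac1p\int_{\mathcal G}\frac{\mathcal E(|u|^p)}{d^{p\theta}}=-\frac1p\int_{\mathcal G}|u|^p\bigl(Q\,d^{-p\theta}-p\theta\, d^{-p\theta}\bigr)=-\frac{Q-p\theta}{p}\int_{\mathcal G}\frac{|u|^p}{d^{p\theta}}.
\]
No boundary terms arise because $|u|^p$ is compactly supported away from the origin, where $d$ is positive and continuous.

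\emph{First equality.} First I check that $\sum_i\lambda_i^{-1}\nabla^\perp_{2i-1,2i}d$ is the vector field $B^{-1}\nabla_{\mathcal G}d$ appearing in Lemma~\ref{lemma:div-dist}, with the same convention as in that lemma's proof. Then
\[
d^{1-p\theta}Z_d=\frac{n+1}{n}\,\frac{z}{d^{p\theta}}-\frac{2p\theta}{n}\,\frac{t}{d^{p\theta+1}}\,B^{-1}\nabla_{\mathcal G}d .
\]
The left-hand side of the proposition is therefore $\frac1p\int_{\mathcal G}\langle\nabla_{\mathcal G}|u|^p,\,d^{1-p\theta}Z_d\rangle$. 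Using Lemma~\ref{lemma:div-dist} in the sense of distributions, this equals $-\frac1p\int_{\mathcal G}|u|^p\,\Div_{\mathcal G}(d^{1-p\theta}Z_d)$. Inserting (i) and (ii) of that lemma, the divergence is
\[
\frac{n+1}{n}\Bigl(\frac{2n}{d^{p\theta}}-\frac{p\theta\langle z,\nabla_{\mathcal G}d\rangle}{d^{p\theta+1}}\Bigr)
-\frac{2p\theta}{n}\Bigl(-\frac{\langle z,\nabla_{\mathcal G}d\rangle}{2d^{p\theta+1}}+\frac{n\,t\,\partial_td}{d^{p\theta+1}}\Bigr).
\]
The coefficients of $\langle z,\nabla_{\mathcal G}d\rangle/d^{p\theta+1}$ combine as $-p\theta\bigl(\frac{n+1}{n}-\frac1n\bigr)=-p\theta$. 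Hence the divergence equals
\[
\frac{2n+2}{d^{p\theta}}-p\theta\,\frac{\langle z,\nabla_{\mathcal G}d\rangle+2t\,\partial_td}{d^{p\theta+1}}=\frac{Q-p\theta}{d^{p\theta}},
\]
by $\mathcal E d=d$ and $Q=2n+2$. This coincides with the expression obtained for the middle term, so both equalities follow.

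\emph{Main obstacle.} The delicate point is justification rather than algebra. Lemma~\ref{lemma:div-dist} is stated against smooth test functions in $\mathcal G\setminus\{0\}$; it is there that the possible singularity of $d$ on the center $L$ is handled via (d.3). Here the test function is $|u|^p$, which is only $C^1$. I would extend the identity by density: mollify $|u|^p$ to $\phi_k\in C^\infty_c(\mathcal G\setminus\{0\})$ with $\phi_k\to|u|^p$ in $C^1$ on a fixed compact set. Both sides then converge, because the fields $z\,d^{-p\theta}$ and $t\,d^{-p\theta-1}B^{-1}\nabla_{\mathcal G}d$ are locally bounded away from the origin by (d.2). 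The same density remark covers the Euclidean integration by parts against $\mathcal E$, which only needs $d$ continuous and positive there.
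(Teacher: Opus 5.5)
Your proposal is correct and follows the paper's proof. The paper likewise computes $\Div_{\mathcal G}(d^{1-p\theta}Z_d)=(Q-p\theta)\,d^{-p\theta}$ from Lemma~\ref{lemma:div-dist} and $\mathcal E d=d$, integrates by parts against $\tfrac1p\nabla_{\mathcal G}|u|^p$, and treats the Euler term via $\mathcal E^*=-Q-\mathcal E$; your mollification step, which passes from smooth test functions to the $C^1$ function $|u|^p$, supplies a justification the paper leaves implicit.
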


\begin{proof}
By the definition of \(Z_d\), on \(\mathcal G\setminus\{0\}\) we have
\[
\Div_{\mathcal G}\!\left(\frac{1}{d^{p\theta-1}}\,Z_d\right)
=
\frac{n+1}{n}\,
\Div_{\mathcal G}\!\left(\frac{z}{d^{p\theta}}\right)
-
\frac{2p\theta}{n}
\sum_{i=1}^n
\frac{1}{\lambda_i}\,
\Div_{\mathcal G}\!\left(
\frac{t}{d^{p\theta+1}}\,
\nabla^{\perp}_{2i-1,2i} d
\right).
\]
Substituting identities \textnormal{(i)}–\textnormal{(ii)} of Lemma~\ref{lemma:div-dist}, and using
\(\mathcal E d = d\), gives
\[
\Div_{\mathcal G}\!\left(\frac{1}{d^{p\theta-1}}\,Z_d\right)
=
\frac{2n+2}{d^{p\theta}}
-
\frac{p\theta}{d^{p\theta}}\, 
=
\frac{Q-p\theta}{d^{p\theta}} .
\]
Hence,
\[
\int_{\mathcal G}
\frac{|u|^{p-2}u}{d^{p\theta-1}}\,
\langle \nabla_{\mathcal G}u, Z_d \rangle\, dz\,dt
=
\frac1p
\int_{\mathcal G}
\left\langle\nabla_{\mathcal G}|u|^p,
\frac{Z_d}{d^{p\theta-1}}\right\rangle\, dz\,dt
=
-\frac{Q-p\theta}{p}
\int_{\mathcal G}
\frac{|u|^p}{d^{p\theta}}\, dz\,dt .
\]

Finally, from the adjoint relation \(\mathcal E^{*}=-Q-\mathcal E\), we have
\[
\int_{\mathcal G}
\frac{|u|^{p-2}u\,\mathcal E u}{d^{p\theta}}\, dz\,dt
=
-\frac{Q-p\theta}{p}
\int_{\mathcal G}
\frac{|u|^p}{d^{p\theta}}\, dz\,dt.
\]
\end{proof}

For the proof of Theorem~\ref{thm:-Hardy-lungo-Z_d} we combine an algebraic
$L^p$-identity, a suitable change of variables, and a family of radial
cut-off functions.
We recall first the following algebraic identity.
\begin{proposition}[Proposition 2.1 in \cite{DA}]
    \label{prop:-identità-fondamentale}
    For every domain \( \Omega \subseteq \mathbb{R}^N \) and every \( f,g \in L^p(\Omega) \) with \( p \ge 2 \),
    \begin{equation}
        \label{eq:-identità-fondamentale}
        \| w(p,f,g)(f-g) \|_{L^2(\Omega)}^2
        =
        \|f\|_{L^p(\Omega)}^p
        + (p-1)\|g\|_{L^p(\Omega)}^p
        - p (|g|^{p-2}g, f)_{L^2(\Omega)} ,
    \end{equation}
    where
    \[
        w(p,f,g)^2
        =
        p(p-1)\int_0^1 s\,|s g + (1-s)f|^{p-2}\,ds.
    \]
\end{proposition}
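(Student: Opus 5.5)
The identity is pointwise in nature, so I plan to prove it first for real numbers and then integrate over $\Omega$. Fix real numbers $a,b$ (to be replaced by $f(x),g(x)$), and consider $F(x)=|x|^p$ on $\mathbb{R}$. Since $p\ge 2$, $F\in C^2(\mathbb{R})$ (for $p=2$ trivially, for $p>2$ because $|x|^{p-2}$ is continuous), with
\[
F'(x)=p|x|^{p-2}x,\qquad F''(x)=p(p-1)|x|^{p-2}.
\]

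The key step is Taylor's formula with integral remainder, expanding $F$ at $b$ and evaluating at $a$:
\[
F(a)=F(b)+F'(b)(a-b)+(a-b)^2\int_0^1(1-\tau)\,F''\bigl(b+\tau(a-b)\bigr)\,d\tau .
\]
Next I change variables $s=1-\tau$. Then $b+\tau(a-b)=sb+(1-s)a$ and $1-\tau=s$, so the remainder equals exactly $w^2(a-b)^2$, where
\[
w^2=p(p-1)\int_0^1 s\,|sb+(1-s)a|^{p-2}\,ds .
\]
Rearranging gives
\[
|a|^p-|b|^p-p|b|^{p-2}b\,(a-b)=|a|^p+(p-1)|b|^p-p|b|^{p-2}b\,a=w(p,a,b)^2(a-b)^2 ,
\]
which is the pointwise version of \eqref{eq:-identità-fondamentale}.

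Finally I would substitute $a=f(x)$, $b=g(x)$ and integrate over $\Omega$. The only point requiring care, and really the only obstacle, is to check that every term is finite, so that the integrated identity is not of the form $\infty-\infty$.
\begin{itemize}
\item For $f,g\in L^p(\Omega)$ we have $|f|^p,|g|^p\in L^1(\Omega)$.
\item By Hölder's inequality with exponents $p/(p-1)$ and $p$, the product $|g|^{p-1}|f|$ lies in $L^1(\Omega)$.
\item Since $w^2\le \tfrac{p(p-1)}{2}\max(|f|,|g|)^{p-2}$, the left-hand side is integrable as well. This also follows directly from the pointwise identity itself.
\end{itemize}
Integrating the pointwise identity then yields \eqref{eq:-identità-fondamentale} for real-valued $f,g$, which is the setting in which the proposition is used.

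If complex-valued functions were needed, the same argument applies to $F(\zeta)=|\zeta|^p$ viewed on $\mathbb{R}^2$, restricted to the segment joining $g$ and $f$. In that case the second derivative along the segment produces an additional nonnegative angular term, so the stated identity would hold with $\mathrm{Re}$ in the inner product only up to that term. I would therefore restrict to the real case, as the paper does.
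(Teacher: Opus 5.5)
Your proof is correct. For real-valued $f,g$, the pointwise identity is exactly Taylor's formula with integral remainder for the $C^2$ function $|x|^p$, after the substitution $s=1-\tau$. Your integrability checks (H\"older on $|g|^{p-1}|f|$, and nonnegativity of the left-hand side) justify integrating it over $\Omega$. The paper gives no proof of its own; it simply quotes Proposition 2.1 of \cite{DA}. Your argument is the natural, standard proof of this identity.

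One small correction to your complex-valued aside. Along $\gamma(\tau)=b+\tau(a-b)$ one has $\frac{d^2}{d\tau^2}|\gamma|^p=p(p-1)|\gamma|^{p-2}|a-b|^2-p(p-2)|\gamma|^{p-4}\bigl(\mathrm{Im}(\bar\gamma(a-b))\bigr)^2$, so the angular term is subtracted from $w^2|a-b|^2$, not added. This does not affect your conclusion. The statement is meant only for real-valued functions (for complex $f,g$ its mixed term would not even be real), and it is used only in that setting.
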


\begin{remark}
\label{rmk:-norma-L2-non-lineare-uguale-a-0}
Observe that
\(
\| w(p,f,g)(f-g) \|_{L^2} = 0
\)
if and only if \(f=g\) almost everywhere in \( \Omega \).
\end{remark}

We will exploit the change of variables $\Phi:\mathbb{R}^{2n}\setminus\{0\} \times \mathbb{R} \to \mathcal G \setminus L$, which has been introduced in the Heisenberg group setting in \cite{FMRS}, and is given by
\begin{equation}
    \label{eq:-change-of-variables-Phi}
\Phi(\omega,\lambda)=\left(
\frac{\omega}{(1+\lambda^2)^{1/4}},
\frac{\lambda}{(1+\lambda^2)^{1/2}}\,|\omega|^2
\right),
\end{equation}
Its inverse is
\[
(\omega,\lambda)
=
\Phi^{-1}(z, t) =
\left(
\rho\,\frac{z}{|z|},
\frac{t}{|z|^2}
\right),
\qquad
\rho = (|z|^4 + t^2)^{1/4},
\]
so that \( |\omega| = \rho \).
Computing the Jacobian determinant of $\phi$ yields that, for every integrable \(f\),
\begin{equation}
\label{eq:-change-of-variables-integrale}
\int_{\mathcal G} f(z,t)\,dz\,dt
=
\int_{\mathbb{R}^{2n}\setminus\{0\}\times\mathbb{R}}
f(\Phi(\omega,\lambda))
\frac{|\omega|^2}{(1+\lambda^2)^{\frac{n+1}{2}}}
\, d\omega\,d\lambda .
\end{equation}

We are finally in a position to complete the proof of Theorem~\ref{thm:-Hardy-lungo-Z_d}.

\begin{proof}[Proof of Theorem~\ref{thm:-Hardy-lungo-Z_d}]
Equality \eqref{eq:-HardyZ_d} follows by direct application of Proposition~\ref{prop:-identità-fondamentale} 
with
\[
    f = \frac{\langle \nabla_{\mathcal G} u , Z_d \rangle}{d^{\theta - 1}},
    \qquad
    g = -\frac{Q - p\theta}{p}\,\frac{u}{d^{\theta}}.
\]
Indeed, using Proposition~\ref{prop:-identità-tra-Z-e-E} to evaluate the mixed term, yields
\[
    \int_{\mathcal G} \frac{|\langle \nabla_{\mathcal G} u , Z_d\rangle|^p}{d^{p(\theta - 1)}}\,dzdt
    - \left|\frac{Q - p\theta}{p}\right|^p
      \int_{\mathcal G} \frac{|u|^p}{d^{p\theta}}\,dzdt
    = \| w(p,f,g)(f-g) \|_{L^2(\mathcal G)}^2
    \ge 0.
\]

In the case where $\langle z , B^{-1}\nabla_z d\rangle = 0$, we prove the sharpness of \eqref{eq:-HardyZ_d}, by characterizing the extremals.
To this aim, we observe that equality in \eqref{eq:-HardyZ_d} requires \(f=g\),
which amounts to
\begin{equation}
\label{eq:-funz_massimizzanti}
\left\langle\nabla_{\mathcal G} u ,
\frac{n+1}{n}\frac{z}{d^\theta}
-
\frac{2p\theta}{n}\frac{t}{d^{\theta+1}}\,B^{-1}\nabla_{\mathcal G} d
 \right\rangle
+
\frac{Q-p\theta}{p}\frac{u}{d^\theta}
=0 .
\end{equation}

Let \(u(z,t)=\varphi(t/|z|^2)\) and compute 
\[
\nabla_{\mathcal G}u
=
\varphi'\!\left(\frac{t}{|z|^2}\right)
\left(
-2\,\frac{t}{|z|^4}z
+
\frac{1}{2|z|^2}Bz
\right).
\]
Then, Lemma~\ref{lemma:-ricostruzione_d} yields
\[
\langle \nabla_{\mathcal G}u, Z_d\rangle
=
\frac{2(p\theta-Q)}{Q-2}\,
d^{-\theta}\,
\frac{t}{|z|^2}\,
\varphi'\!\left(\frac{t}{|z|^2}\right).
\]
Equation \eqref{eq:-funz_massimizzanti} therefore reduces to
\[
\frac{t}{|z|^2}\,
\varphi'\!\left(\frac{t}{|z|^2}\right)
=
\frac{Q-2}{2p}\,
\varphi\!\left(\frac{t}{|z|^2}\right).
\]
This shows that the extremal profile is
\(u(z,t)
=
\left(\frac{|t|}{|z|^{2}}\right)^{\frac{Q-2}{2p}}\),
up to a multiplicative constant.

It remains to implement a cut-off argument.  
To this aim, we fix a family of radial cut-off functions \( g_\varepsilon \in C_c^\infty(\mathbb{R}^+) \), \(\varepsilon>0\), 
such that
\[
0 \le g_\varepsilon(r) \le 1,\qquad
g_\varepsilon(r)=
\begin{cases}
0 & 0 \le r \le \varepsilon \ \text{or}\ r \ge 1/\varepsilon,\\
1 & 2\varepsilon \le r \le 1/(2\varepsilon),
\end{cases}
\]
and
\[
|g_\varepsilon'(r)| \le 
\begin{cases}
c/\varepsilon & \varepsilon \le r \le 2\varepsilon,\\[0.3em]
c\,\varepsilon & 1/(2\varepsilon) \le r \le 1/\varepsilon,
\end{cases}
\]
for some constant \(c>0\) independent of \( \varepsilon \).
Fix \(\eta\in C^\infty_c(\mathbb R\setminus\{0\})\) and set
\[
u_\varepsilon(z,t)
=
\left(\frac{|t|}{|z|^2}\right)^{\frac{Q-2}{2p}}
g_\varepsilon\!\left(\frac{t}{|z|^2}\right)\eta(d(z,t)),
\qquad \varepsilon>0,
\]
so that \(u_\varepsilon\in C^\infty_c(\mathcal G\setminus L)\).

Since positive homogeneous functions on \(\mathcal G\) are equivalent (Section 5.1, \cite{BLU}), 
there exists \(C>0\) such that, with \(\rho=(|z|^4+t^2)^{1/4}\),
\begin{equation}
\label{eq:-equiv_norme}
\frac{1}{C}\,\rho \le d \le C\,\rho .
\end{equation}
We can thus choose $\eta$ such that there exists radii \(0<r_1<R_1\) and \(0<r_2<R_2\) such that
\[
\{r_1\le \rho \le R_1\}\subseteq \{\eta(d)\ge 1\},
\qquad
\operatorname{supp}\eta(d)\subseteq \{r_2\le \rho \le R_2\}.
\]

In what follows, \( \mathrm{c}>0 \) denotes a constant that may vary from line to line.
Using the change of variables introduced in \eqref{eq:-change-of-variables-Phi}, and the corresponding formula \eqref{eq:-change-of-variables-integrale}, we compute
\[
\int_{\mathcal G} \frac{|u_\varepsilon|^p}{d^{p\theta}}\, dz \, dt
=
\int_{r_1\le |\omega|\le R_1}
\int_{0}^{+\infty}
\frac{|\omega|^2}{d^{p\theta}(\Phi(\omega,\lambda))} 
\frac{\lambda^{\frac{Q-2}{2}}}{(1+\lambda^2)^{\frac{n+1}{2}}}
|g_\varepsilon(\lambda)|^p \, d\lambda\, d\omega .
\]
Considering polar coordinates for $\omega\in \mathbb{R}^{2n}\setminus \{0\}$, and recalling that $|\omega|=\rho$, we have that
\[
    \int_{r_1\le |\omega|\le R_1} \frac{|\omega|^2}{d^{p\theta}(\Phi(\omega,\lambda))} \, d\omega
    \ge
    \frac{|\mathbb{S}^{2n-1}|}{C^{p\theta}}\int_{r_1\le \rho \le R_1} {\rho^{2-p\theta+2n-1}} \, d\rho
    \ge \mathrm{c}>0.
\]
On the other hand, for \(\lambda\) sufficiently large, we have that $\lambda^{Q-2}/(1+\lambda^2)^{\frac{n+1}{2}}\ge \mathrm{c}\lambda^{-1}$. 
Hence,
\[
\int_{\mathcal G}\frac{|u_\varepsilon|^p}{d^{p\theta}}
\ge
\mathrm{c}
\int_{\lambda_0}^{\frac{1}{2\varepsilon}} \frac{1}{\lambda}\,d\lambda 
= \mathrm{c}\, \log\!\left(\frac{1}{2\lambda_0\varepsilon}\right).
\]

We are left to compute the l.h.s. of \eqref{eq:-HardyZ_d} with \(u_\varepsilon\).
We have,
\[
\left|\frac{\langle \nabla_{\mathcal G}u_\varepsilon, Z_d\rangle}{d^\theta}\right|^p
= |a+b+c|^p,
\]
with
\begin{eqnarray*}
    a &=& -\frac{Q - p \theta}{p} \left( \frac{|t|}{|z|^2} \right)^{\frac{Q-2}{2p}} g_\varepsilon\!\left( \frac{t}{|z|^2} \right) \eta(d) \frac{1}{d^\theta}, \\
    b &=& -\frac{2 (Q - p \theta)}{Q - 2} \left( \frac{|t|}{|z|^2} \right)^{\frac{Q-2}{2p}} \frac{t}{|z|^2} \frac{1}{d^\theta} g_\varepsilon'\!\left( \frac{t}{|z|^2} \right) \eta(d), \\
    c &=& \frac{Q}{Q - 2} \frac{\langle z , \nabla_{\mathcal{G}} d\rangle}{d^\theta} \left( \frac{|t|}{|z|^2} \right)^{\frac{Q-2}{2p}} g_\varepsilon\!\left( \frac{t}{|z|^2} \right) \eta'(d).
\end{eqnarray*}
Invoking Corollary~2.1 in~\cite{DA}, we use that  
for all \(a,b,c\in\mathbb{R}\) there exists a constant \(c_p>0\) such that
\begin{equation*}
|a+b+c|^p
\le |a|^p
+ c_p |a|^{p-1}(|b|+|c|)
+ c_p |b|^p
+ c_p^2 |b|^{p-1}|c|
+ c_p^2 |c|^p .
\end{equation*}
The contribution of \(|a|^p\) is explicit:
\begin{equation*}
\int_{\mathcal{G}} |a|^p \,dz\,dt
= \left|\frac{Q-p\theta}{p}\right|^p
\int_{\mathcal{G}} \frac{|u_\varepsilon|^p}{d^{p\theta}}\,dz\,dt .
\end{equation*}
The remaining terms are estimated using the structure of \(g_\varepsilon\) 
and the support of \(\eta\).  
After the change of variables \(\lambda = t/|z|^2\), each integral involving 
\(b\) or \(c\) is bounded by an expression of the form
\begin{equation}\label{eq:model-integral}
\int_0^\infty 
\frac{\lambda^\alpha}{(1+\lambda^2)^{Q/4}}\,
|g_\varepsilon(\lambda)|^\beta\,
|g'_\varepsilon(\lambda)|^\gamma \, d\lambda ,
\end{equation}
where \(\beta,\gamma\ge 0\) and
\begin{equation}\label{eq:alpha-gamma-cases}
\alpha = \frac{Q-2}{2} + \gamma \quad \text{if } \gamma>0,
\qquad
\alpha = \frac{Q-3}{2} \quad \text{if } \gamma=0.
\end{equation}

To justify these exponents we note that on \(\operatorname{supp}\eta(d)\) one has 
\(|z|,|t|\le R_2\) and \(|\nabla_{\mathcal{G}} d|\le \mathrm{c}\). Hence
\[
|\langle z,\nabla_{\mathcal{G}} d\rangle|
\le \mathrm{c}\,\frac{|z|}{\sqrt{|t|}}
= \mathrm{c}\,\lambda^{-1/2}
\]
in the term involving \(|a|^{p-1}|c|\), and
\[
|\langle z,\nabla_{\mathcal{G}} d\rangle|^p
\le \frac{|z|}{\sqrt{|t|}}\,|z|^{p-1}\sqrt{|t|}\,|\nabla_{\mathcal{G}} d|^p
\le  \mathrm{c}\,\lambda^{-1/2}
\]
in the term involving \(|c|^p\).
In contrast, for the term \(|b|^{p-1}|c|\), the powers produced by 
\((|t|/|z|^2)^{\cdot}\) give the required exponent \(\alpha\), and the uniform bound
\(
|\langle z,\nabla_{\mathcal{G}} d\rangle|\le \mathrm{c}
\)
is sufficient.

A direct inspection of \eqref{eq:model-integral} using 
\eqref{eq:alpha-gamma-cases} shows that all such terms are uniformly bounded as 
\(\varepsilon\to 0\). Hence
\begin{equation*}
\int_{\mathcal{G}} |a|^{p-1}|b|\,dz\,dt = \mathcal{O}(1), \qquad
\int_{\mathcal{G}} |a|^{p-1}|c| \,dz\,dt= \mathcal{O}(1), \qquad
\int_{\mathcal{G}} |b|^p\,dz\,dt = \mathcal{O}(1),
\end{equation*}
and similarly
\begin{equation*}
\int_{\mathcal{G}} |b|^{p-1}|c|\,dz\,dt = \mathcal{O}(1), \qquad
\int_{\mathcal{G}} |c|^p\,dz\,dt =\mathcal{O}(1).
\end{equation*}
Collecting the preceding estimates gives
\begin{multline*}
    \left|\frac{Q-p\theta}{p}\right|^{p}
    \le 
    \inf_{u\in C_c^\infty(\mathcal{G}\setminus\{0\})}
    \frac{\displaystyle\int_{\mathcal{G}}
    \frac{|\langle \nabla_{\mathcal{G}}u, Z_d\rangle|^{p}}{d^{p(\theta-1)}}\,dz\,dt}
    {\displaystyle\int_{\mathcal{G}}
    \frac{|u|^{p}}{d^{p\theta}}\,dz\,dt}\\
    \le 
    \frac{\displaystyle\int_{\mathcal{G}}
    \frac{|\langle\nabla_{\mathcal{G}}u_\varepsilon, Z_d\rangle|^{p}}{d^{p(\theta-1)}}\,dz\,dt}
    {\displaystyle\int_{\mathcal{G}}
    \frac{|u_\varepsilon|^{p}}{d^{p\theta}}\,dz\,dt}
    =
    \left|\frac{Q-p\theta}{p}\right|^{p}
    + \frac{\mathcal{O}(1)}{-\log(2 \lambda_0 \varepsilon)}.
\end{multline*}
Letting \(\varepsilon\to 0\) concludes the proof of the sharpness of \eqref{eq:-HardyZ_d} and thus of the statement.
\end{proof}

\section{Unweighted Hardy inequality}
\label{sec:unweighted-hardy-inequality}

In this section we obtain explicit lower bounds for the quantity defined in \eqref{eq:cdp}, i.e., the optimal constant $c(d,p,\theta)$ for the Hardy inequality  
\begin{equation}\label{eq:unweighted_hardy_general}
\int_{\mathcal{G}} \frac{|\nabla_{\mathcal{G}}u|^{p}}{d^{p(\theta-1)}}\,dz\,dt
\;\ge\;
c \int_{\mathcal{G}} \frac{|u|^{p}}{d^{p\theta}}\,dz\,dt,
\qquad
u\in C_c^\infty(\mathcal{G}\setminus\{0\}),\ \theta\in\mathbb{R}.
\end{equation}
This is achieved via Corollary~\ref{corr:lower-bound}, by providing upper bounds for the vector field \(Z_d\) defined in \eqref{eq:-definizione-di-Z_d} for specific choices of the homogeneous function \(d\).

It is convenient to distinguish the isotropic regime $\lambda_1=\dots=\lambda_n$ 
from the genuinely non-isotropic one.
In the former case the matrix $B$ takes the block diagonal form
\begin{equation}\label{eq:-B-caso-isotropo}
B=\lambda_1
\begin{pmatrix}
0 & 1\\
-1 & 0
\end{pmatrix}
\oplus \cdots \oplus
\begin{pmatrix}
0 & 1\\
-1 & 0
\end{pmatrix},
\end{equation}
and the change of variables
\[
(z,t)\mapsto (\eta,\tau)=\bigl(z,\tfrac{\lambda_1}{4}\,t\bigr)
\]
reduces the structure to that of the standard Heisenberg group.
The analysis in the isotropic setting will be carried out in the next section, 
while the non-isotropic case will be treated separately.

\subsection{The Heisenberg group}\label{sec:heisenberg}
For \(n\in\mathbb{N}\), the Heisenberg group is 
\(\mathbb{H}^n=(\mathbb{R}^{2n+1},\circ)\), 
with group law associated with the matrix \(B\) in~\eqref{eq:-B-caso-isotropo}, 
where we fix \(\lambda_1=4\).
The Lie algebra is generated by the left-invariant vector fields
\[
X_{2i-1}=\partial_{z_{2i-1}}+2z_{2i}\,\partial_t,\qquad
X_{2i}=\partial_{z_{2i}}-2z_{2i-1}\,\partial_t,\qquad
\partial_t,
\quad i=1,\dots,n,
\]
with the only non-trivial commutation relations
\[
[X_{2i-1},X_{2i}]=-4\,\partial_t.
\]
We denote the horizontal gradient by
\[
\nabla_{\mathbb{H}}=(X_1,\dots,X_{2n}).
\]
In this setting, the matrix $B$ satisfies
\[
B^{-1}=4J,
\]
where $J$ denotes the canonical skew-symmetric matrix acting as a rotation by $\pi/2$ on each horizontal two-dimensional subspace, that is, $J^2=-I$.
Accordingly, we set
\[
\nabla_{\mathbb H}^{\perp}u := J\,\nabla_{\mathbb H}u.
\]
The non-isotropic dilations
\[
\delta_\gamma(z,t)=(\gamma z,\gamma^2 t),\qquad \gamma>0,
\]
are group automorphisms and determine the homogeneous structure of 
\(\mathbb{H}^n\), whose homogeneous dimension is \(Q=2n+2\).

Two explicit regular positive homogeneous functions will be considered on \(\mathbb{H}^n\): the Korányi gauge $\rho = (|z|^4 + t^2)^{1/4}$ and the Carnot--Carathéodory distance $\delta_{cc}$. The former is associated with the fundamental solution of the Heisenberg sub-Laplacian \cite{follandFundamental1973}, while the latter is the natural distance induced by the horizontal vector fields \cite{agrachevComprehensive2019}. 

\begin{proof}
    [Proof of Theorem~\ref{thm:constant-hardy-heisnberg}]

    After showing that $\rho$ and $\delta_{cc}$ satisfy the assumptions of Theorem~\ref{thm:-Hardy-lungo-Z_d}, we will provide explicit upper bounds for the vector field \(Z_d\) associated with each of them, thus leading to the desired lower bounds for \(c(d,p,\theta)\) via Corollary~\ref{corr:lower-bound}.
    
    Let us start by considering the Koranyi gauge \(\rho\). 
The vector field defined in \eqref{eq:-definizione-di-Z_d} takes the form
\[
Z_{\rho}
=\frac{Q}{Q-2}\,\frac{z}{\rho}
-\frac{p\theta}{Q-2}\,\frac{t}{\rho^{2}}\,
\nabla_{\mathbb H}^{\perp}\rho.
\]
For the Korányi gauge, the identities
\[
|\nabla_{\mathbb H}\rho|^{2}=\frac{|z|^{2}}{\rho^{2}},
\qquad
\langle z,\nabla_{\mathbb H}^{\perp}\rho\rangle=\frac{|z|^{2}t}{\rho^{3}}
\]
follow from direct computations.
Together with Remark~\ref{rmk:hom-norm}, they imply that assumptions \textup{(d.1)--(d.3)} are satisfied.\\

Introducing the parameter $\lambda=t/|z|^{2}$, a straightforward computation yields
\[
|Z_\rho|^{2}
=\frac{1}{(1+\lambda^{2})^{1/2}}
\left[
\left(\frac{Q}{Q-2}\right)^{2}
+\frac{p\theta(p\theta-2Q)}{(Q-2)^{2}}
\frac{\lambda^{2}}{1+\lambda^{2}}
\right].
\]
Maximization over \(\lambda\in\mathbb{R}\) leads to the desired bound \eqref{eq:cdp-koranyi}.


We now turn to the Carnot--Carathéodory distance \(\delta_{cc}\). Recall that $\delta_{cc}$ is continous and smooth outside the center $\{z=0\}$, see, e.g., \cite[Theorem~3.1]{HZ}.
For later use, it is convenient to introduce polar coordinates adapted to \(\delta_{cc}\),
we follow~\cite{AR} (see also~\cite{FP}).
Let $S = \{ w \in \mathbb{C}^n : |w|=1 \}$ be the unit sphere in \(\mathbb{C}^n\), and define 
\[
\Phi:\mathcal S\times[-2\pi,2\pi]\times(0,+\infty)\to \mathbb{H}^n,
\qquad 
(a+ib,\nu,r)\mapsto (z,t),
\]
where $a,b\in \mathbb{R}^n$ and, for \(\nu\neq0\), we let
\[
\begin{cases}
\displaystyle 
z_{2i-1}
= \frac{b_i(1-\cos\nu)+a_i\sin\nu}{\nu}\, r,\\[10pt]
\displaystyle 
z_{2i}
= \frac{-a_i(1-\cos\nu)+b_i\sin\nu}{\nu}\, r,\\[10pt]
\displaystyle 
t = 2\,\frac{\nu-\sin\nu}{\nu^2}\, r^2,
\end{cases}
\]
while, for \(\nu=0\), we let
\[
z_{2i-1} = a_i \, r, \quad z_{2i} = b_i \, r, \quad t = 0,
\]
Then,
\[
\delta_{cc}(\Phi(a+ib,\nu,r)) = r.
\]

We record the following identities for the derivatives of \(\delta_{cc}\)
(Lemma~3.11 in~\cite{AR}).
If \((z,t)=\Phi(a+ib,\nu,r)\) with \(\nu\neq0\), then
\begin{equation}\label{eq:X-der-deltacc}
X_{2i-1}\delta_{cc}=b_i\sin\nu+a_i\cos\nu,\qquad
X_{2i}\delta_{cc}=b_i\cos\nu-a_i\sin\nu,
\end{equation}
\begin{equation}\label{eq:t-der-deltacc}
\partial_t\delta_{cc}=\frac{\nu}{4r}.
\end{equation}
We also use the relations
\begin{equation}\label{eq:z2-and-perp}
|z|^{2}=2\,\frac{1-\cos\nu}{\nu^{2}}\,r^{2},
\qquad
z\cdot\nabla_{\mathbb H}^{\perp}\delta_{cc}
=\frac{1-\cos\nu}{\nu}\, r,
\end{equation}
whose verification follows directly from the parametrisation \(\Phi\).
In particular,
\[
|\nabla_{\mathbb H}\delta_{cc}|=1,
\qquad
|\partial_t\delta_{cc}|\in L^\infty_{\mathrm{loc}}
(\mathbb H^n\setminus\{0\}).
\]

Combining \eqref{eq:t-der-deltacc}–\eqref{eq:z2-and-perp} yields
\[
\left\langle\frac{z}{|z|},\nabla_{\mathbb H}^{\perp}\delta_{cc}\right\rangle
=2|z|\,\partial_t\delta_{cc}.
\]
Since the right-hand side tends to zero as \(|z|\to0\),
this implies that assumption \emph{(d.3)} is satisfied.
%
Moreover,
\[
\langle z, B^{-1}\nabla_{\mathbb H}\delta_{cc}\rangle
=\langle \nabla_{\mathbb H}^{\perp}\delta_{cc}, z\rangle
-2|z|^{2}\,\partial_t\delta_{cc}
=0,
\]
so that \(\delta_{cc}\) satisfies all structural assumptions required in
Theorem~\ref{thm:-Hardy-lungo-Z_d}, and we can apply Corollary~\ref{corr:lower-bound}.

In this case, we have that
\[
Z_{\delta_{cc}}
=\frac{Q}{Q-2}\frac{z}{\delta_{cc}}
-\frac{p\theta}{Q-2}\frac{t}{\delta_{cc}^{2}}\,
\nabla_{\mathbb H}^{\perp}\delta_{cc}.
\]
Hence, the polar representation \((z,t)=\Phi(a+ib,\nu,r)\) together with
\eqref{eq:z2-and-perp}–\eqref{eq:t-der-deltacc}
yields
\[
|Z_{\delta_{cc}}(z,t)|^{2}
=
2\!\left(\frac{Q}{Q-2}\right)^{2}\frac{1-\cos\nu}{\nu^{2}}
+
\left(\frac{2p\theta}{Q-2}\right)^{2}
\!\left(\frac{\nu-\sin\nu}{\nu^{2}}\right)^{2}
-
\frac{4p\theta\,Q}{(Q-2)^{2}}
\frac{\nu-\sin\nu}{\nu^{2}}\,
\frac{1-\cos\nu}{\nu}.
\]
Since the above is independent of $a,b$ and $r$, we set
\[
g(\nu)=|Z_{\delta_{cc}}(\Phi(a+ib,\nu,r))|^{2},
\qquad \nu\in[-2\pi,2\pi].
\]
The function \(g\) is continuous and positive on a compact interval; hence it admits a strictly positive and finite supremum.
In particular, this implies that
\begin{equation}
    c(\delta_{cc},p,\theta) \ge \frac{1}{\|g\|_\infty^{p/2}} \left| \frac{Q - p \theta}{p} \right|,
\end{equation}
proving the second part of \eqref{eq:cdp-carnot-caratheodory}.
A closed expression for this supremum does not seem accessible in general,
due to the nonlinear dependence on \((p,\theta,Q)\).

To prove the first bound in \eqref{eq:cdp-carnot-caratheodory},
we show that
the maximum of \(g\) on \([-2\pi,2\pi]\) is attained at \(\nu = 0\) if we assume that 
\[
Q \ge \frac{4 p \theta}{12 - \pi^2} \ge 0.
\]
Set
\[
 \mathfrak f(\nu) := \frac{1 - \cos \nu}{\nu^2}, 
\qquad 
\mathfrak g(\nu) := \frac{\nu - \sin \nu}{\nu^2}, 
\qquad 
\mathfrak h(\nu) := p \theta \, \mathfrak g(\nu) - Q \nu \, \mathfrak f(\nu).
\]
Then,
\[
g(\nu)
=
2\left(\frac{Q}{Q - 2}\right)^2 \mathfrak f(\nu) 
+ \frac{4 p \theta}{(Q - 2)^2} \, \mathfrak g(\nu) \mathfrak h(\nu).
\]
Since both \(\mathfrak g\) and \(\mathfrak h\) are odd, and for 
\(0<\nu<2\sqrt{3 - {p\theta}/{Q}}\) one has
\[
\mathfrak f(\nu)\ge \frac12 - \frac{\nu^{2}}{24}>0, 
\qquad 
\mathfrak g(\nu)\le \frac{\nu}{6}, 
\qquad 
\mathfrak h(\nu) 
\leq \nu\left( \frac{p \theta}{6} - \frac{Q}{2} \right) 
+ \frac{Q}{24} \nu^3 
< 0,
\]
it follows that
\(
\mathfrak g(\nu)\mathfrak h(\nu)\le 0 .
\)
Hence
\[
g(\nu)
\le 
\left(\frac{Q}{Q-2}\right)^{2}
\qquad \text{for } \,|\nu|<2\sqrt{3 - \frac{p\theta}{Q}},
\]
and \(\nu=0\) is a local maximum of
\(g\).

To conclude that \(\nu=0\) yields the global maximum, 
it remains to control the region \(|\nu|\ge\pi\), since the assumption on \(Q\) guarantees
\(
2\sqrt{3 - {p\theta}/{Q}} \ge \pi .
\)
Since \(|Z_{\delta_{cc}}|\) is even in \(\nu\), it suffices to restrict to 
\(\nu\in[\pi,2\pi]\).
On this interval,
\[
f(\nu)\le f(\pi),\qquad 
g(\nu)\le g(\pi)=\frac{1}{\pi},
\]
and therefore
\[
g(\nu)
\le
2\left(\frac{Q}{Q-2}\right)^{2} f(\pi)
+
\frac{4 p^{2}\theta^{2}}{(Q-2)^{2}}\, g(\pi)^{2}
=
\left(\frac{Q}{Q-2}\right)^{2}
\frac{4}{\pi^{2}}
\left(1+\frac{p^{2}\theta^{2}}{Q^{2}}\right).
\]
The right-hand side is bounded above by 
\(\left(\frac{Q}{Q-2}\right)^{2}\).
Consequently,
\begin{equation}\label{eq:max-Z-Carnot}
\sup_{\nu\in[-2\pi,2\pi]} g(\nu)
=
\left(\frac{Q}{Q-2}\right)^{2},
\end{equation}
and the maximum is attained at \(\nu=0\).
This completes the proof.
\end{proof}




\subsection{The non-isotropic case}
\label{sec:non-isotropic-case}
In this section we focus on the case where at least two eigenvalues of $B$ are distinct.
Unlike the isotropic regime, no change of variables can reduce the underlying group law to the Heisenberg structure. This prevents the simplifications exploited in the previous section  and makes the behaviour of the associated vector field \(Z_d\) substantially more delicate. However, for the particular homogeneous norm defined in \eqref{eq:koranyi-generalized}, we can still provide explicit upper bounds for \(|Z_d|\).

\begin{proof}[Proof of Theorem~\ref{thm:constant-hardy-carnot-generalized}]
Since the matrix $B$ is block diagonal as in \eqref{eq:-Definizione-matrice-B-diagonalizzata}, we have
\begin{equation}
    (-B^2)^{1/2} =  \begin{pmatrix} \lambda_1 & 0 & \cdots & 0 & 0  \\
    0 & \lambda_1 & \cdots & 0 & 0 \\
    \vdots & \vdots & \ddots & \vdots &  \vdots \\
    \vdots & \vdots & \cdots & \lambda_n & 0 \\
    0 & 0 & \cdots & 0  & \lambda_n
\end{pmatrix}.
\end{equation}
As a consequence, the norm in \eqref{eq:koranyi-generalized} reads
\[
\rho_B(z,t)=\bigl(|z|_B^{4}+t^{2}\bigr)^{1/4},
\qquad
|z|_B=\sqrt{\sum_{i=1}^{n}\frac{\lambda_i}4(z_{2i-1}^{2}+z_{2i}^{2})}.
\]
It is straightforward to check that \(\rho_B\) is a homogeneous function of degree 1, smooth on \(\mathcal{G}\setminus\{0\}\), and satisfies assumptions \emph{(d.1)}–\emph{(d.3)} in Theorem~\ref{thm:-Hardy-lungo-Z_d}.

The associated vector field takes the form
\begin{equation}
    \label{eq:Z-rho-B}
Z_{\rho_B}(z,t)
=   
\frac{Q}{Q-2}\,\frac{z}{\rho_B}
-\frac{4p\theta}{Q-2}\,\frac{t}{\rho_B^{2}}\,B^{-1}\nabla_{\mathcal{G}}\rho_B.
\end{equation}
In order to bound $c(\rho_B,p,\theta)$ we observe that 
\begin{equation}
    |\langle v, Z_{\rho_B}(z,t)\rangle| 
    \le {|v|}{\sqrt{\frac4{\min \lambda_i}}} |Z_{\rho_B}(z,t)|_B \le 
    |v| \frac2{\sqrt{\min \lambda_i}}\, {\sup_{\mathcal G}|Z_{\rho_B}|_B}.
\end{equation}
Hence, 
\begin{equation}
    c(\rho_B,p,\theta) \ge\left( \frac{\sqrt{\min \lambda_i}}{2\sup_{\mathcal G}|Z_{\rho_B}|_B} \right)^p \left|\frac{Q-p\theta}{p}\right|^p.
\end{equation}
In particular, we reduce the proof to computing the maximum of \(|Z_{\rho_B}(z,t)|_B \).

Direct computations show that 
\begin{gather}
    \label{eq:d-perp-lungo-Bz}
    |B^{-1}\nabla_{\mathcal{G}} \rho_B|_{B}^2 = \sum_{i=1}^n \frac{1}{4\lambda_i} \left( (X_{2i-1} \rho_B)^2 + (X_{2i} \rho_B)^2 \right) = \frac{|z|_B^2}{16\rho_B^2},\\
    \langle z, (-B^2)^{1/2} B^{-1} \nabla_{\mathcal{G}} \rho_B \rangle = \frac{|z|_B^{2}t}{\rho_B^{3}}.
\end{gather}
Observe that, letting $t=\lambda|z|_B^2$, we have $\rho_B(z,\lambda |z|_B^2)=|z|_B(1+\lambda^2)^{1/4}$.
Hence, from \eqref{eq:Z-rho-B},  
\begin{equation}
    g(\lambda) := |Z_{\rho_B}(z,\lambda |z|_B^{2})|_B^{2} = \frac{1}{(1+\lambda^{2})^{1/2}}
    \left[
    \left(\frac{Q}{Q-2}\right)^{2}
    +\frac{p\theta(p\theta-2Q)}{(Q-2)^{2}}
    \frac{\lambda^{2}}{1+\lambda^{2}}
    \right].
\end{equation}
Maximization over \(\lambda\in\mathbb{R}\) leads to the desired bound \eqref{eq:cdp-koranyi-generalized}.
\end{proof}



\begin{remark}
In the isotropic case \(\lambda_i\equiv 4\), one has \(d=2\rho\), so the expression
above reduces to the Heisenberg value obtained with the Korányi gauge.
\end{remark}

The homogeneous norm \(d(z,t)\) considered above is not, in general, 
associated with the fundamental solution of the sub-Laplacian on \(\mathcal{G}\).
In the Heisenberg case, the Korányi norm \(\rho\) arises from the fundamental 
solution and enjoys the property that \(|Z_\rho|\) attains its maximum on the 
horizontal plane \(\{t=0\}\).
It is natural to ask whether this remains true for homogeneous norms generated 
by fundamental solutions on non-isotropic groups.

The next example shows that this is not the case.
Consider \(\mathcal{G}=(\mathbb{R}^4\times\mathbb{R},\circ,\delta_\gamma)\) with 
group law determined by
\[
B=\begin{pmatrix}
0 & \lambda_1 & 0 & 0 \\
-\lambda_1 & 0 & 0 & 0 \\
0 & 0 & 0 & \lambda_2 \\
0 & 0 & -\lambda_2 & 0
\end{pmatrix},
\qquad Q=6.
\]
Beals--Gaveau--Greiner~\cite{BGG} obtained the following integral formula 
for the fundamental solution,
\[
\Gamma(z,t)=
\frac{\lambda_1\lambda_2}{4\pi^3}
\int_{-\infty}^{+\infty}
\frac{s^2\,\mathrm{csch}(2\lambda_1 s)\cosh(2\lambda_2 s)}%
{\bigl(\lambda_1(z_1^2+z_2^2)s\,\coth(2\lambda_1 s)
+ \lambda_2(z_3^2+z_4^2)s\,\coth(2\lambda_2 s)
- i t s\bigr)^2}\,ds.
\]
The corresponding homogeneous norm is \(\rho=\Gamma^{-1/4}\).
For \(\lambda_1=\tfrac12\) and \(\lambda_2=1\), Balogh--Tyson~\cite{BT} gave the 
explicit expression
\[
\rho(z,t)
=
\frac{
\bigl(\bigl(\tfrac{z_1^2+z_2^2}{2}+z_3^2+z_4^2\bigr)^2+t^2\bigr)^{1/8}
\left(
\tfrac{z_1^2+z_2^2}{2}
+\sqrt{
\bigl(\tfrac{z_1^2+z_2^2}{2}+z_3^2+z_4^2\bigr)^2+t^2}
\right)^{3/8}
}{
\left(
\tfrac{z_1^2+z_2^2}{2}+z_3^2+z_4^2
+\sqrt{
\bigl(\tfrac{z_1^2+z_2^2}{2}+z_3^2+z_4^2\bigr)^2+t^2}
\right)^{1/8}
}.
\]
See also~\cite{BZ} for additional details on the explicit derivation.

The behaviour of the associated vector field may differ substantially from the 
isotropic situation.  
In order to illustrate this, consider the case \(p\theta = 2\) and the homogeneous 
norm \(\rho\) described above.  
For this choice, the vector field takes the form
\[
Z_{\rho}
=
\frac{3}{2}\,\frac{z}{\rho}
-
2\,\frac{t}{\rho^{2}}
\left(
-\frac{X_{2}\rho}{\lambda_{1}},\,
 \frac{X_{1}\rho}{\lambda_{1}},\,
-\frac{X_{4}\rho}{\lambda_{2}},\,
 \frac{X_{3}\rho}{\lambda_{2}}
\right),
\]
and therefore
\[
\begin{aligned}
|Z_{\rho}(z,t)|^{2}
&=
\frac{9}{4}\,\frac{|z|^{2}}{\rho^{2}}
+
4\,\frac{t^{2}}{\rho^{4}}
\left(
\frac{(X_{1}\rho)^{2}+(X_{2}\rho)^{2}}{\lambda_{1}^{2}}
+
\frac{(X_{3}\rho)^{2}+(X_{4}\rho)^{2}}{\lambda_{2}^{2}}
\right)
\\
&\quad
-6\,\frac{t}{\rho^{3}}
\left(
\frac{-z_{1}X_{2}\rho + z_{2}X_{1}\rho}{\lambda_{1}}
+
\frac{-z_{3}X_{4}\rho + z_{4}X_{3}\rho}{\lambda_{2}}
\right).
\end{aligned}
\]

Set
\[
B(z,t)=|Z_{\rho}(z,t)|^{2}-|Z_{\rho}(z,0)|^{2}.
\]
Then \(|Z_{\rho}|\) attains its maximum at \(t=0\) if and only if
\(B(z,t)\le 0\).

In contrast with the Heisenberg case, the analytic structure of \(\rho\) 
precludes a direct sign analysis of \(B(z,t)\).
A symbolic computation (performed in \textsc{Mathematica}) shows that
\[
B\bigl((0,0,0,\tfrac14),\,0\bigr)>0.
\]
Thus the maximum of \(|Z_{\rho}|\) is not achieved on the horizontal plane, even 
though \(\rho\) arises from the fundamental solution of the sub-Laplacian.

\subsection{Product of Heisenberg Groups}
\label{sec:product-heisnebrg}
In the Euclidean setting, the sharp Hardy constant is not stable under Cartesian products, for instance, the optimal constant in $\mathbb{R}^4 = \mathbb{R}^2 \times \mathbb{R}^2$ does not coincide with the sum of the two dimensional ones.
This naturally raises the question of what happens in the setting of the Heisenberg group.
Our aim is to understand how the lower bound for the optimal constant in the unweighted Hardy inequality behaves under Cartesian products of Heisenberg groups.

We denote by $\mathbb{H}^n$ the standard Heisenberg group in $\mathbb{R}^{2n+1}$, 
and consider 
\[
\mathcal{G} = (\mathbb{H}^n)^N .
\]
This is a step--two Carnot group with underlying manifold 
$\mathbb{R}^{2nN} \times \mathbb{R}^N$, dilations
\[
\delta_\gamma(z,t) = (\gamma z, \gamma^2 t),
\]
and homogeneous dimension
\[
Q = 2N(n+1).
\]

Writing $z=(z^{(1)},\dots,z^{(N)})$ with $z^{(j)}\in\mathbb{R}^{2n}$ and 
$t=(t_1,\dots,t_N)\in\mathbb{R}^N$, the group law is given componentwise by
\[
(z,t)\circ(\eta,\tau)
=
\Bigl(
z+\eta,\ 
t+\tau
+
\tfrac12\bigl(
\langle B_{\mathbb{H}} z^{(1)},\eta^{(1)}\rangle,\dots,
\langle B_{\mathbb{H}} z^{(N)},\eta^{(N)}\rangle
\bigr)
\Bigr),
\]
where $B_{\mathbb{H}}$ is the standard skew symmetric matrix of the Heisenberg group, given by \eqref{eq:-B-caso-isotropo} with 
\(\lambda_1=4\).

The horizontal layer is generated by
\[
X_{2i-1}^{(j)} 
 = \partial_{z_{2i-1}^{(j)}} + 2 z_{2i}^{(j)}\,\partial_{t_j},
\qquad
X_{2i}^{(j)}
 = \partial_{z_{2i}^{(j)}} - 2 z_{2i-1}^{(j)}\,\partial_{t_j},
\]
for $j=1,\dots,N$ and $i=1,\dots,n$, with the only non-trivial commutation relations
\[
[X_{2i}^{(j)}, X_{2i-1}^{(j)}] = 4\,\partial_{t_j}.
\]
In particular, each vertical vector field can be written as
\[
\partial_{t_j}
=
\frac{1}{4n}\sum_{i=1}^n [X_{2i}^{(j)}, X_{2i-1}^{(j)}].
\]

The infinitesimal generator of dilations on $\mathcal{G}$ is
\[
\mathcal{E}
=
\sum_{j=1}^N
\left(
\sum_{i=1}^n
z_{2i-1}^{(j)}\,\partial_{z_{2i-1}^{(j)}}
+
z_{2i}^{(j)}\,\partial_{z_{2i}^{(j)}}
+
2 t_j\,\partial_{t_j}
\right),
\]
and can be rewritten using the horizontal vector fields as
\begin{equation}\label{eq:ProH-definizione-E}
\mathcal{E}
=
\sum_{j=1}^N
\left(
\sum_{i=1}^n
z_{2i-1}^{(j)}\,X_{2i-1}^{(j)}
+
z_{2i}^{(j)}\,X_{2i}^{(j)}
+
\frac{t_j}{2n}\,[X_{2i}^{(j)}, X_{2i-1}^{(j)}]
\right).
\end{equation}

To state the analogue of Theorem~\ref{thm:-Hardy-lungo-Z_d} in this product setting,
we use the horizontal gradient on the $j$-th factor,
\[
\nabla^{(j)} d=(X_1^{(j)} d,\dots,X_{2n}^{(j)} d),
\qquad
\nabla^{(j)\perp} d=(-X_2^{(j)} d, X_1^{(j)} d,\dots,-X_{2n}^{(j)} d, X_{2n-1}^{(j)} d),
\]
and proceed with the following proposition.

\begin{proposition}
\label{prop:-ProH-Z_d-e-E}
Let $d \in C^\infty(\mathcal{G}\setminus\{0\})$ be a homogeneous norm, and fix
$p\ge 2$, $\theta\in\mathbb{R}$.  
Define
\begin{equation}\label{eq:-ProdH-Z_d}
Z_d
 = \frac{n+1}{n}\,\frac{z}{d}
   - \frac{p\theta}{2n}\,\frac{1}{d^{2}}
     \sum_{j=1}^N t_j\,\nabla^{(j)\perp} d,
\end{equation}
so that the following identity holds for all \( u \in C_c^\infty(\mathcal{G} \setminus \{0\}) \)
\begin{equation}\label{eq: ProH Z_d e E}
\int_{\mathcal{G}} \frac{|u|^{p-2}u\,\mathcal{E}u}{d^{p\theta}} \,dz\,dt
 =
\int_{\mathcal{G}}
\frac{|u|^{p-2}u}{d^{p\theta-1}}\,
\langle\nabla_{\mathcal{G}} u, Z_d \rangle\,dz\,dt
\end{equation}
\end{proposition}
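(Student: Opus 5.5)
The plan is to repeat the proof of Proposition~\ref{prop:-identità-tra-Z-e-E}, now on $(\mathbb H^n)^N$. Both sides of \eqref{eq: ProH Z_d e E} should equal $-\frac{Q-p\theta}{p}\int_{\mathcal G}|u|^p d^{-p\theta}$ with $Q=2N(n+1)$. The core of the argument is the pointwise identity
\[
\Div_{\mathcal G}\Bigl(\frac{Z_d}{d^{p\theta-1}}\Bigr)=\frac{Q-p\theta}{d^{p\theta}}\qquad\text{on }\mathcal G\setminus\{0\}.
\]
Since $d$ is assumed $C^\infty$ on $\mathcal G\setminus\{0\}$, no boundary analysis near the center is needed, unlike in Lemma~\ref{lemma:div-dist}. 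The identity then holds classically, and we can integrate against $|u|^p$ with $u\in C_c^\infty(\mathcal G\setminus\{0\})$.

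We compute the divergence termwise, in analogy with Lemma~\ref{lemma:div-dist}. For the first term, $\Div_{\mathcal G}(z/d^{p\theta})=2nN/d^{p\theta}-p\theta\langle z,\nabla_{\mathcal G}d\rangle/d^{p\theta+1}$, using $X_k^{(j)}z^{(j)}_k=1$. For the $j$-th vertical term we use three facts:
\begin{itemize}
\item $\nabla^{(j)}t_j=-2\,\mathbf z^{(j),\perp}$ (with the sign convention of $\nabla^{(j)\perp}$), which gives $\langle\nabla^{(j)}t_j,\nabla^{(j)\perp}d\rangle=-2\langle z^{(j)},\nabla^{(j)}d\rangle$;
\item $\langle\nabla^{(j)}d,\nabla^{(j)\perp}d\rangle=0$, so the derivative falling on $d^{-(p\theta+1)}$ drops out;
\item the analogue of Lemma~\ref{lemma:-div^perp_casoRegolare} summed over $i$, namely $\Div_{\mathcal G}(\nabla^{(j)\perp}d)=\sum_i[X^{(j)}_{2i},X^{(j)}_{2i-1}]d=4n\,\partial_{t_j}d$.
\end{itemize}
Together these give
\[
\Div_{\mathcal G}\Bigl(\frac{t_j}{d^{p\theta+1}}\nabla^{(j)\perp}d\Bigr)=\frac{-2\langle z^{(j)},\nabla^{(j)}d\rangle+4n\,t_j\partial_{t_j}d}{d^{p\theta+1}}.
\]

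Next we multiply by $-p\theta/(2n)$ and add $\frac{n+1}{n}$ times the first term. The cross terms $\frac{p\theta}{n}\langle z,\nabla_{\mathcal G} d\rangle$ combine so that the total coefficient of $\langle z,\nabla_{\mathcal G}d\rangle/d^{p\theta+1}$ is $-p\theta$. The vertical contributions give $-2p\theta\sum_j t_j\partial_{t_j}d/d^{p\theta+1}$. Homogeneity, i.e.\ $\mathcal E d=\langle z,\nabla_{\mathcal G}d\rangle+2\sum_j t_j\partial_{t_j}d=d$, collapses the result to $(2N(n+1)-p\theta)/d^{p\theta}$, as claimed. Integrating by parts with $|u|^{p-2}u\langle\nabla_{\mathcal G}u,\cdot\rangle=\frac1p\langle\nabla_{\mathcal G}|u|^p,\cdot\rangle$ then evaluates the right-hand side of \eqref{eq: ProH Z_d e E}.

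For the left-hand side we use $\mathcal E^*=-Q-\mathcal E$ together with $\mathcal E(d^{-p\theta})=-p\theta d^{-p\theta}$. This gives $\int|u|^{p-2}u\,\mathcal Eu\,d^{-p\theta}=\frac1p\int\mathcal E(|u|^p)d^{-p\theta}=-\frac{Q-p\theta}{p}\int|u|^pd^{-p\theta}$, and the two sides agree. The main obstacle is the bookkeeping of signs and factors in the $j$-th vertical divergence: the sign of $\nabla^{(j)}t_j$ relative to $\nabla^{(j)\perp}$, and the factor $4n$ from summing $n$ commutators. The coefficient $p\theta/(2n)$ in \eqref{eq:-ProdH-Z_d} was tuned precisely so that these cancel against the $\frac{n+1}{n}$ term. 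I would verify this first on the case $N=1$, where the result must reduce to Proposition~\ref{prop:-identità-tra-Z-e-E} with $\lambda_i=4$.
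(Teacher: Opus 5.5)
Your argument is correct, but for this proposition the paper argues differently. What you do is the divergence argument used for the one-vertical-direction identity in Section~\ref{sec:proof-theorem}, carried over to $(\mathbb H^n)^N$.

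\textbf{The paper's route.} Here the paper never computes $\Div_{\mathcal G}(Z_d/d^{p\theta-1})$. It writes $\mathcal E=\langle z,\nabla_{\mathcal G}\rangle+\frac{1}{2n}\sum_j t_j\sum_i[X^{(j)}_{2i},X^{(j)}_{2i-1}]$. It then integrates each commutator by parts against $|u|^{p-2}u\,t_j d^{-p\theta}$, via $\int\phi\,[X_{2i},X_{2i-1}]u=\int\langle\nabla_{2i-1,2i}u,\nabla^{\perp}_{2i-1,2i}\phi\rangle$.
\begin{itemize}
\item The part where $\nabla^\perp$ falls on $|u|^{p-2}u$ vanishes because $\langle\nabla_{2i-1,2i}u,\nabla^\perp_{2i-1,2i}u\rangle=0$.
\item The remaining term $\frac{1}{2n}\nabla^{(j)\perp}(t_j d^{-p\theta})$ produces exactly the $\frac1n z^{(j)}/d$ and $t_j$ pieces of $Z_d$.
\end{itemize}
This derivation is constructive: it shows where $Z_d$ comes from. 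It also gives the identity for any smooth positive $d$, with no use of homogeneity. Homogeneity and $\mathcal E^*=-Q-\mathcal E$ enter only afterwards, in Theorem~\ref{thm:-ProH-Hardy-lungo-Z}.

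\textbf{Your route.} You instead verify the given $Z_d$ through the pointwise formula $\Div_{\mathcal G}(Z_d/d^{p\theta-1})=(Q-p\theta)\,d^{-p\theta}$, which needs $\mathcal E d=d$. You then obtain the equality only because both sides equal $-\frac{Q-p\theta}{p}\int_{\mathcal G}|u|^p\,d^{-p\theta}$. In exchange, you get that evaluation at the same time, and it is the substance of Theorem~\ref{thm:-ProH-Hardy-lungo-Z}.

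The same skew-orthogonality $\langle\nabla f,\nabla^\perp f\rangle=0$ drives both arguments: the paper applies it to $u$, you apply it to $d$.
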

The next result is the corresponding Hardy inequality.

\begin{theorem}
\label{thm:-ProH-Hardy-lungo-Z}
Let $d\in C^\infty(\mathcal{G}\setminus\{0\})$ be a homogeneous norm, and fix $p\ge 2$, $\theta\in\mathbb{R}$.  
Then every $u\in C_c^\infty(\mathcal{G}\setminus\{0\})$ satisfies
\begin{equation}\label{eq:-ProH-Hardy-lungo-Z}
\int_{\mathcal{G}}
\frac{|\langle\nabla_{\mathcal{G}} u ,Z_d\rangle|^p}{d^{p(\theta-1)}}\,dz\,dt
\;\ge\;
\left|\frac{Q-p\theta}{p}\right|^p
\int_{\mathcal{G}}
\frac{|u|^p}{d^{p\theta}}\,dz\,dt ,
\end{equation}
where $Z_d$ is the vector field defined in~\eqref{eq:-ProdH-Z_d}.
\end{theorem}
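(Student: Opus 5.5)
The plan mirrors the proof of Theorem~\ref{thm:-Hardy-lungo-Z_d}: the algebraic $L^p$ identity, with the mixed term computed through the Euler field. First I would establish the mixed-term identity
\[
\int_{\mathcal G}\frac{|u|^{p-2}u}{d^{p\theta-1}}\langle\nabla_{\mathcal G}u,Z_d\rangle\,dz\,dt=-\frac{Q-p\theta}{p}\int_{\mathcal G}\frac{|u|^p}{d^{p\theta}}\,dz\,dt .
\]
Proposition~\ref{prop:-ProH-Z_d-e-E} already equates the left side with $\int|u|^{p-2}u\,\mathcal E u\,d^{-p\theta}$. I would evaluate the latter by writing $|u|^{p-2}u\,\mathcal Eu=\frac1p\mathcal E|u|^p$ and integrating by parts with $\mathcal E^*=-Q-\mathcal E$, where now $Q=2N(n+1)$. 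Homogeneity gives $\mathcal E d^{-p\theta}=-p\theta\, d^{-p\theta}$. Since $u$ is compactly supported away from the origin and $d$ is smooth there, there are no boundary terms, and the value $-\frac{Q-p\theta}{p}\int|u|^pd^{-p\theta}$ follows.

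As a check on Proposition~\ref{prop:-ProH-Z_d-e-E}, I would confirm it directly via a horizontal divergence computation, factor by factor. For each $j$, the analogue of Lemma~\ref{lemma:-div^perp_casoRegolare} gives $\Div(\nabla^{(j)\perp}_{2i-1,2i}\varphi)=4\partial_{t_j}\varphi$. Also $\nabla^{(j)}t_k=-2\delta_{jk}z^{(j),\perp}$, and the horizontal divergence of $z$ equals $2nN$. Combining these as in Lemma~\ref{lemma:div-dist} together with $\mathcal E d=d$ should yield
\[
\Div_{\mathcal G}\bigl(d^{1-p\theta}Z_d\bigr)=\frac{Q-p\theta}{d^{p\theta}} .
\]
The coefficients $\frac{n+1}{n}$ and $\frac{p\theta}{2n}$ are tuned precisely so that the $\langle z,\nabla d\rangle$ terms recombine with $\sum_j t_j\partial_{t_j}d$ into $d$. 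Since $d$ is assumed smooth on $\mathcal G\setminus\{0\}$, no analogue of condition (d.3) or of the $\varepsilon$-cylinder argument is needed.

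Next I would apply Proposition~\ref{prop:-identità-fondamentale} with
\[
f=\frac{\langle\nabla_{\mathcal G}u,Z_d\rangle}{d^{\theta-1}},\qquad g=-\frac{Q-p\theta}{p}\,\frac{u}{d^\theta}.
\]
Both functions lie in $L^p$ because $u$ is compactly supported in $\mathcal G\setminus\{0\}$, where $Z_d$ and $d^{\pm1}$ are bounded. The cross term is
\[
p\,(|g|^{p-2}g,f)=-p\left|\tfrac{Q-p\theta}{p}\right|^{p-2}\tfrac{Q-p\theta}{p}\int\frac{|u|^{p-2}u}{d^{p\theta-1}}\langle\nabla u,Z_d\rangle ,
\]
and by the identity above this equals $p\left|\frac{Q-p\theta}{p}\right|^p\int|u|^pd^{-p\theta}$. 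Substituting gives
\[
\|f\|_p^p-\left|\tfrac{Q-p\theta}{p}\right|^p\int\frac{|u|^p}{d^{p\theta}}=\|w(f-g)\|_2^2\ge0,
\]
which is the claim.

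The main obstacle is the divergence bookkeeping. Each $t_j$ interacts only with its own factor's commutators, so the cross-factor terms must be checked to vanish. The $\frac{1}{2n}$ normalization must also be matched with the commutator constant $4$ and with the $n$ pairs per factor. I would verify this one factor at a time.
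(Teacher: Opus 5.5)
Your proposal is correct and follows essentially the paper's proof: the mixed term is computed from Proposition~\ref{prop:-ProH-Z_d-e-E} together with $\mathcal E^*=-Q-\mathcal E$ (now with $Q=2N(n+1)$), and the algebraic $L^p$ identity (Proposition 2.1 in \cite{DA}) is then applied with exactly the same $f$ and $g$. Your extra divergence check $\Div_{\mathcal G}(d^{1-p\theta}Z_d)=(Q-p\theta)\,d^{-p\theta}$ is also correct, since the $\langle z,\nabla_{\mathcal G}d\rangle$ terms combine with $2\sum_j t_j\partial_{t_j}d$ into $\mathcal E d=d$; it is, however, redundant once that proposition is available.
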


\begin{proof}
Set
\[
f=\frac{\langle\nabla_{\mathcal{G}}u, Z_d\rangle}{d^{\theta-1}},
\qquad
g=-\frac{Q-p\theta}{p}\,\frac{u}{d^{\theta}} .
\]
By Proposition~\ref{prop:-ProH-Z_d-e-E} and the identity $\mathcal{E}^*=-Q-\mathcal{E}$,
\[
\int_{\mathcal{G}}
\frac{|u|^{p-2}u}{d^{p\theta-1}}\,
\langle \nabla_{\mathcal{G}}u,Z_d \rangle\,dz\,dt
=
\frac{1}{p}
\int_{\mathcal{G}}
\frac{\mathcal{E}(|u|^{p})}{d^{p\theta}}\,dz\,dt
=
-\frac{Q-p\theta}{p}
\int_{\mathcal{G}}
\frac{|u|^{p}}{d^{p\theta}}\,dz\,dt .
\]
Substituting into \eqref{eq:-identità-fondamentale} yields \eqref{eq:-ProH-Hardy-lungo-Z}.
\end{proof}

\begin{proof}[Proof of Proposition~\ref{prop:-ProH-Z_d-e-E}]
An integration by parts in the commutator term gives, for each $j$,
\[
\frac{1}{2n}\sum_{i=1}^n
\int_{\mathcal{G}}
\frac{|u|^{p-2}u\,t_j}{d^{p\theta}}
[X_{2i}^{(j)},X_{2i-1}^{(j)}]u\,dz\,dt
=
\int_{\mathcal{G}}
\frac{|u|^{p-2}u}{d^{p\theta-1}}
\, \left\langle \nabla_{\mathcal{G}}u,
\frac{1}{n}\frac{z^{(j)}}{d}
-\frac{p\theta}{2n}\frac{t_j}{d^2}\nabla^{(j)\perp}d
\right\rangle
dz\,dt .
\]

Using the decomposition
\[
\mathcal{E}
=
\langle z,\nabla_{\mathcal{G}}\rangle
+
\sum_{j=1}^N
\frac{1}{2n}\sum_{i=1}^n
t_j [X_{2i}^{(j)},X_{2i-1}^{(j)}],
\]
we obtain
\[
\int_{\mathcal{G}}
\frac{|u|^{p-2}u\,\mathcal{E}u}{d^{p\theta}}\,dz\,dt
=
\int_{\mathcal{G}}
\frac{|u|^{p-2}u}{d^{p\theta-1}}\, \left\langle
\nabla_{\mathcal{G}}u,\,
\frac{z}{d}
+
\sum_{j=1}^N
\Big[
\frac{1}{n}\frac{z^{(j)}}{d}
-\frac{p\theta}{2n}\frac{t_j}{d^2}\nabla^{(j)\perp}d
\Big]
\right\rangle
dz\,dt .
\]

The vector field inside the parentheses is exactly $Z_d$.
\end{proof}

\medskip

We apply Theorem~\ref{thm:-ProH-Hardy-lungo-Z} to the homogeneous norm
\begin{equation}
\label{eq:-ProH-Definizione-di-p}
\rho(z,t) = \left( |z|^4 + |t|^2 \right)^{1/4}.
\end{equation}
which is the homogeneous gauge naturally induced by the fundamental solution.

For every $j$, a direct computation gives
\[
|\nabla^{(j)}\rho|^2
= \frac{|z^{(j)}|^2}{\rho^6}\big(|z|^4 + t_j^2\big),
\qquad
\langle z,\nabla^{(j)\perp}\rho\rangle
= \frac{t_j |z^{(j)}|^2}{\rho^3}.
\]
Substituting these identities into the expression of $Z_d$ yields
\[
|Z_\rho|^2
=
\left(\frac{n+1}{n}\right)^2\frac{|z|^2}{\rho^2}
+
\frac{p\theta}{n^2}\,\sum_{j=1}^N\,
\frac{t_j^2 |z^{(j)}|^2}{\rho^6}
\,
\left(
\frac{p\theta}{4}\frac{|z|^4+t_j^2}{\rho^4}
-(n+1)
\right).
\]

Since $|z|^4+t_j^2\le \rho^4$, the summation term is nonpositive whenever \(\theta \geq 0\) and
\[
n+1 \ge \frac{p\theta}{4}.
\]
In this regime, $|Z_\rho|$ attains its maximum at $t=0$, and
\[
\sup_{(z,t)} |Z_\rho| = \frac{n+1}{n}.
\]

\begin{corollary}
Let $p\ge2$, $\theta\ge0$, and let \( \rho(z,t) \) be the Korányi norm on \( \mathcal{G} \), as defined in \eqref{eq:-ProH-Definizione-di-p}.
If
\[
n \ge \frac{1}{4}(p\theta - 4),
\]
then every $u\in C_c^\infty(\mathcal{G}\setminus\{0\})$ satisfies
\begin{equation}\label{eq:-ProH-hardy-non-pesata-2}
\int_{\mathcal{G}}
\frac{|\nabla_{\mathcal{G}} u|^p}{\rho^{p(\theta-1)}}\,dz\,dt
\ge
\left(\frac{n}{n+1}\right)^p
\left|\frac{Q - p\theta}{p}\right|^p
\int_{\mathcal{G}}
\frac{|u|^p}{\rho^{p\theta}}\,dz\,dt.
\end{equation}
\end{corollary}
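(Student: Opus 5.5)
The plan is to combine Theorem~\ref{thm:-ProH-Hardy-lungo-Z} with the pointwise bound on $|Z_\rho|$ obtained just above, in the same way that Corollary~\ref{corr:lower-bound} follows from Theorem~\ref{thm:-Hardy-lungo-Z_d}.

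First, I will apply Theorem~\ref{thm:-ProH-Hardy-lungo-Z} with $d=\rho$. This requires $\rho\in C^\infty(\mathcal G\setminus\{0\})$ and that $\rho$ be a homogeneous norm. The integration by parts in Proposition~\ref{prop:-ProH-Z_d-e-E} uses only smoothness away from the origin, homogeneity, and local boundedness of $\nabla_{\mathcal G}\rho$ and $\partial_{t_j}\rho$. Smoothness away from $0$ holds because $|z|^4+|t|^2$ is a polynomial that vanishes only at the origin. Homogeneity of degree one under $\delta_\gamma$ is immediate. The triangle inequality is not actually used in the argument, since only the integration by parts against $u\in C_c^\infty(\mathcal G\setminus\{0\})$ enters. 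This gives
\[
\int_{\mathcal G}\frac{|\langle\nabla_{\mathcal G}u,Z_\rho\rangle|^p}{\rho^{p(\theta-1)}}\ge\Bigl|\frac{Q-p\theta}{p}\Bigr|^p\int_{\mathcal G}\frac{|u|^p}{\rho^{p\theta}} .
\]

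Second, I will bound the left-hand side using Cauchy--Schwarz: $|\langle\nabla_{\mathcal G}u,Z_\rho\rangle|\le|\nabla_{\mathcal G}u|\,\sup|Z_\rho|$. It then remains to show $\sup|Z_\rho|\le (n+1)/n$. For this I use the expansion of $|Z_\rho|^2$ computed above, obtained from $|\nabla^{(j)}\rho|^2=|z^{(j)}|^2(|z|^4+t_j^2)/\rho^6$ and $\langle z,\nabla^{(j)\perp}\rho\rangle=t_j|z^{(j)}|^2/\rho^3$. One must also check that the cross terms between different factors vanish, since $\nabla^{(j)\perp}\rho$ is supported on the $j$-th block. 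Under $\theta\ge0$, $p\theta\ge 0$, so each summand has the sign of
\[
\frac{p\theta}{4}\,\frac{|z|^4+t_j^2}{\rho^4}-(n+1)\le\frac{p\theta}{4}-(n+1)\le 0,
\]
where the middle inequality uses $|z|^4+t_j^2\le\rho^4$ and the last is exactly the hypothesis $n\ge(p\theta-4)/4$. Hence $|Z_\rho|^2\le((n+1)/n)^2|z|^2/\rho^2\le((n+1)/n)^2$, since $|z|\le\rho$. Raising to the $p$-th power and dividing yields \eqref{eq:-ProH-hardy-non-pesata-2}.

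The main obstacle is the verification of the two gradient identities for $\rho$ in the product group, in particular the $\nabla^{(j)\perp}$ pairing. This is where the vertical term $t_j\partial_{t_j}\rho=t_j^2/(2\rho^3)$ enters through $X^{(j)}$. I would do it directly: $X^{(j)}_{2i-1}\rho=\rho^{-3}\bigl(|z|^2z^{(j)}_{2i-1}+t_jz^{(j)}_{2i}\bigr)$, and similarly for $X^{(j)}_{2i}$. Summing squares and pairings then gives both formulas. The sign cases with $\theta<0$ are excluded by hypothesis, so no further case analysis is needed.
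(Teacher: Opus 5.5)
Your proposal is correct and follows the paper's own argument. You apply Theorem~\ref{thm:-ProH-Hardy-lungo-Z} with $d=\rho$ and expand $|Z_\rho|^2$ via the two gradient identities. You then note that the sum is nonpositive when $\theta\ge 0$ and $n+1\ge p\theta/4$, which gives $|Z_\rho|\le (n+1)/n$, and conclude by Cauchy--Schwarz. Your side remarks are also correct and make explicit details the paper leaves implicit: the triangle inequality is not used, the cross terms between different factors vanish, and $|z|\le\rho$.
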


Two elementary remarks related to the corollary are the following.
The dependence on $N$ in the constant of \eqref{eq:-ProH-hardy-non-pesata-2}
occurs only through the homogeneous dimension
$Q = 2N(n+1) = N Q_n$, where $Q_n = 2n+2$ is the homogeneous
dimension of $\mathbb{H}^n$. Hence the constant takes the form
\[
\left(\frac{n}{n+1}\right)^p \left|\frac{N Q_n - p\theta}{p}\right|^p.
\]

The condition $n \ge \frac14 (p\theta - 4)$ is void whenever $0 \le p\theta \le 4$, for instance, when $p\theta = 2$.

\medskip

We extend the previous construction to step-two Carnot groups with several vertical directions.
Let $n,h\in\mathbb{N}$ with $n\ge h$, and decompose $\mathbb{R}^{N}=\mathbb{R}^{2n}\times\mathbb{R}^{h}$
with coordinates $(z,t)$.  
Choose real, linearly independent, pairwise commuting skew-symmetric matrices
$B^{(1)},\dots,B^{(h)}\in\mathbb{R}^{2n\times 2n}$.  
After an orthogonal change of coordinates, the matrices take the block form
\[
B^{(j)}=\mathrm{diag}\Bigl(
\begin{pmatrix}
0 & \lambda^{(j)}_{1} \\
-\lambda^{(j)}_{1} & 0
\end{pmatrix},
\dots,
\begin{pmatrix}
0 & \lambda^{(j)}_{n} \\
-\lambda^{(j)}_{n} & 0
\end{pmatrix}
\Bigr),
\qquad
\lambda^{(j)}_{i}\ge0,\ i=1,\dots,n,\ j=1,\dots,h.
\]

Endow $\mathbb{R}^{N}$ with the law
\[
(z,t)\circ(\eta,\tau)
=
\bigl(z+\eta,\ t+\tau+\tfrac12\langle Bz,\eta\rangle\bigr),
\qquad
\langle Bz,\eta\rangle
=
(\langle B^{(1)}z,\eta\rangle,\dots,\langle B^{(h)}z,\eta\rangle),
\]
and with dilations $\delta_{\gamma}(z,t)=(\gamma z,\gamma^{2}t)$.
The resulting space $\mathcal{G}=(\mathbb{R}^{N},\circ,\delta_{\gamma})$ is a step-two Carnot group
with horizontal and vertical layers of dimensions $2n$ and $h$.  
The left-invariant horizontal vector fields are
\[
X_{2i-1}
=
\partial_{z_{2i-1}}
+\sum_{j=1}^{h}\frac{\lambda^{(j)}_{i}}{2}\,z_{2i}\,\partial_{t_{j}},
\qquad
X_{2i}
=
\partial_{z_{2i}}
-\sum_{j=1}^{h}\frac{\lambda^{(j)}_{i}}{2}\,z_{2i-1}\,\partial_{t_{j}},
\qquad i=1,\dots,n,
\]
and the only non-vanishing brackets are
\begin{equation}\label{eq:-Gen-braket-non-nulli}
[X_{2i},X_{2i-1}]
=
\sum_{j=1}^{h}\lambda^{(j)}_{i}\,\partial_{t_{j}},
\qquad i=1,\dots,n.
\end{equation}

The linear independence of $B^{(1)},\dots,B^{(h)}$ ensures the existence of indices
$i_{1},\dots,i_{h}\in\{1,\dots,n\}$ such that the matrix
\[
A=\begin{pmatrix}
\lambda^{(1)}_{i_1} & \cdots & \lambda^{(h)}_{i_1} \\
\vdots & \ddots & \vdots \\
\lambda^{(1)}_{i_h} & \cdots & \lambda^{(h)}_{i_h}
\end{pmatrix}\in\mathbb{M}_{h\times h}(\mathbb{R})
\]
is invertible.  
From \eqref{eq:-Gen-braket-non-nulli}, each vertical derivative admits the representation
\[
\partial_{t_j}
= \sum_{k=1}^{h} (A^{-1})_{jk}\,[X_{2 i_k},X_{2 i_k -1}],
\qquad j=1,\dots,h.
\]

The infinitesimal generator of dilations on $\mathcal{G}$ is
\[
\mathcal{E}
=
\sum_{i=1}^{n}\bigl(z_{2i-1}\,\partial_{z_{2i-1}}
+ z_{2i}\,\partial_{z_{2i}}\bigr)
+ 2\sum_{j=1}^{h} t_j\,\partial_{t_j},
\]
and can equivalently be written in terms of the horizontal fields as
\begin{equation}\label{eq:-GCG-definizione-di-E}
\mathcal{E}
=
\sum_{i=1}^{n}\bigl(z_{2i-1}\,X_{2i-1}
+ z_{2i}\,X_{2i}\bigr)
+
2\sum_{j,k=1}^{h}
t_j\,(A^{-1})_{jk}\,
[X_{2 i_k}, X_{2 i_k - 1}].
\end{equation}

For later use, we set
\[
\nabla_{2i-1,2i} u
=
(0,\dots,0, X_{2i-1}u, X_{2i}u, 0,\dots,0), 
\qquad
\nabla_{2i-1,2i}^{\perp}u
=
(0,\dots,0, -X_{2i}u, X_{2i-1}u, 0,\dots,0),
\]
and
\[
\mathbf{z}^{(i)}
=
(0,\dots,0, z_{2i-1}, z_{2i}, 0,\dots,0).
\]

\begin{theorem}\label{thm:caso-carnot-generale}
Let $d\in C^\infty(\mathcal{G}\setminus\{0\})$ be a smooth homogeneous norm, let $p\ge2$, and let $\theta\in\mathbb{R}$.  
Set
\begin{equation}\label{eq:-GCG-definizione-di-Z}
Z_d
=
\frac{z}{d}
+
\sum_{k=1}^{h}\frac{\mathbf{z}^{(i_k)}}{d}
-
2p\theta
\sum_{j,k=1}^{h}
(A^{-1})_{jk}\,
\frac{t_j}{d^{2}}\,
\nabla^{\perp}_{2i_k-1,\,2i_k} d.
\end{equation}
Then, for every $u\in C_c^\infty(\mathcal{G}\setminus\{0\})$,
\begin{equation}\label{eq:-GCG-Hardy-lungo-Z}
\int_{\mathcal{G}}
\frac{|\langle\nabla_{\mathcal{G}}u, Z_d\rangle |^{p}}{d^{p(\theta-1)}}\,dz\,dt
\;\ge\;
\left|\frac{Q-p\theta}{p}\right|^{p}
\int_{\mathcal{G}}
\frac{|u|^{p}}{d^{p\theta}}\,dz\,dt.
\end{equation}
\end{theorem}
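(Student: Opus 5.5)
The argument will follow the scheme already used for Theorem~\ref{thm:-Hardy-lungo-Z_d} and Theorem~\ref{thm:-ProH-Hardy-lungo-Z}. First, for every $u\in C_c^\infty(\mathcal G\setminus\{0\})$, we establish the identity
\[
\int_{\mathcal G}\frac{|u|^{p-2}u\,\mathcal E u}{d^{p\theta}}\,dz\,dt
=\int_{\mathcal G}\frac{|u|^{p-2}u}{d^{p\theta-1}}\,\langle\nabla_{\mathcal G}u,Z_d\rangle\,dz\,dt .
\]
Second, the adjoint relation $\mathcal E^*=-Q-\mathcal E$ (with $Q=2n+2h$) gives
\[
\int_{\mathcal G}\frac{|u|^{p-2}u\,\mathcal E u}{d^{p\theta}}
=\frac1p\int_{\mathcal G}\frac{\mathcal E(|u|^p)}{d^{p\theta}}
=-\frac{Q-p\theta}{p}\int_{\mathcal G}\frac{|u|^p}{d^{p\theta}},
\]
where we use $\mathcal E(d^{-p\theta})=-p\theta\,d^{-p\theta}$. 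Third, we apply Proposition~\ref{prop:-identità-fondamentale} with
\[
f=\frac{\langle\nabla_{\mathcal G}u,Z_d\rangle}{d^{\theta-1}},\qquad g=-\frac{Q-p\theta}{p}\frac{u}{d^{\theta}}.
\]
The mixed term $p(|g|^{p-2}g,f)$ then equals $p\left|\frac{Q-p\theta}{p}\right|^p\int|u|^p d^{-p\theta}$. Nonnegativity of $\|w(f-g)\|_{L^2}^2$ yields \eqref{eq:-GCG-Hardy-lungo-Z}.

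The real work is the first identity. We use representation \eqref{eq:-GCG-definizione-di-E} and split $\mathcal E$ into its horizontal part $\langle z,\nabla_{\mathcal G}\rangle$, which contributes the term $z/d$ directly, and its commutator part. For each fixed $k$, we set $\psi_k=\sum_j (A^{-1})_{jk}t_j$, which is homogeneous of degree $2$, and handle the integral
\[
\int_{\mathcal G}\frac{|u|^{p-2}u\,\psi_k}{d^{p\theta}}\,[X_{2i_k},X_{2i_k-1}]u .
\]
We expand the commutator as $X_{2i_k}X_{2i_k-1}u-X_{2i_k-1}X_{2i_k}u$ and integrate by parts once, since the $X$'s are divergence-free. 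The terms containing second derivatives of $u$ combine into $\Div_{\mathcal G}$ of $\nabla^\perp_{2i_k-1,2i_k}u$, and the integration by parts moves one derivative onto $|u|^{p-2}u\,\psi_k d^{-p\theta}$. Since $|u|^{p-2}u\,\nabla^\perp u$ paired with $\nabla(|u|^{p-2}u)$ gives a term of the form $\langle\nabla v,\nabla^\perp v\rangle$, which vanishes pointwise, only derivatives of $\psi_k d^{-p\theta}$ survive. This produces
\[
\int\frac{|u|^{p-2}u}{d^{p\theta}}\Big\langle\nabla_{\mathcal G}u,\ -\nabla^\perp_{2i_k-1,2i_k}\psi_k+p\theta\,\frac{\psi_k}{d}\nabla^\perp_{2i_k-1,2i_k}d\Big\rangle ,
\]
up to the sign convention for $\perp$. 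This is the computation of Lemma~\ref{lemma:div-dist}, now done pair by pair.

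It remains to compute $\nabla^\perp_{2i_k-1,2i_k}\psi_k$ from the explicit $X$'s. We have $X_{2i-1}t_j=\frac{\lambda^{(j)}_i}{2}z_{2i}$ and $X_{2i}t_j=-\frac{\lambda^{(j)}_i}{2}z_{2i-1}$. Therefore
\[
\nabla^\perp_{2i_k-1,2i_k}\psi_k=\frac12\Big(\sum_j(A^{-1})_{jk}\lambda^{(j)}_{i_k}\Big)\mathbf z^{(i_k)} .
\]
The step I expect to be the main obstacle is fixing the constants so that this coefficient matches the $\mathbf z^{(i_k)}/d$ term in \eqref{eq:-GCG-definizione-di-Z}, together with the prefactor $2p\theta$ and the indexing convention of $A$ (whether $\sum_j (A^{-1})_{jk}\lambda^{(j)}_{i_k}$ reduces to $\delta_{kk}=1$ through $A^{-1}A=I$ or $AA^{-1}=I$). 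I would first check it in the case $h=1$ against Proposition~\ref{prop:-identità-tra-Z-e-E}, and then track the factor $2$ from \eqref{eq:-GCG-definizione-di-E}.

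Finally, all integrations by parts take place on $\mathcal G\setminus\{0\}$ and involve $d$, which is smooth there by hypothesis, and $u$, which is compactly supported. So no boundary terms arise, and no analogue of condition (d.3) is needed.
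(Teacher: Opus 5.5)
Your proposal is correct and follows essentially the same route as the paper: split $\mathcal E$ into $\langle z,\nabla_{\mathcal G}\rangle$ plus the commutator part, integrate the commutator term by parts to obtain $2\int |u|^{p-2}u\,\langle\nabla_{\mathcal G}u,\nabla^\perp_{2i_k-1,2i_k}(\psi_k d^{-p\theta})\rangle$, and conclude with $\mathcal E^*=-Q-\mathcal E$ and the algebraic $L^p$ identity. The bookkeeping you flagged closes cleanly: with the paper's $\perp$ convention the sign in front of $\nabla^\perp_{2i_k-1,2i_k}\psi_k$ is $+$, the factor $2$ from $\mathcal E$ cancels your $\tfrac12$, and since $A_{kj}=\lambda^{(j)}_{i_k}$ one has $\sum_j\lambda^{(j)}_{i_k}(A^{-1})_{jk}=(AA^{-1})_{kk}=1$, which reproduces exactly the $\mathbf z^{(i_k)}/d$ and $-2p\theta$ terms of $Z_d$.
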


The quantity $|Z_d|$ is uniformly bounded for every smooth homogeneous norm $d$.
Computing the precise value of $\max |Z_d|^{2}$ is algebraically involved, and even for simple norms no closed form seems to be available.

\begin{proof}
As in the proof of Theorem~\ref{thm:-Hardy-lungo-Z_d}, the inequality
\eqref{eq:-GCG-Hardy-lungo-Z} follows once
\begin{equation}\label{eq:-GCG-relazione-Z-e-E}
\int_{\mathcal{G}} \frac{|u|^{p-2}u}{d^{p\theta-1}}\,\langle
\nabla_{\mathcal{G}}u, Z_d \rangle\,dz\,dt
=
\int_{\mathcal{G}} \frac{|u|^{p-2}u\,\mathcal{E}u}{d^{p\theta}}\,dz\,dt
\end{equation}
has been established.

An integration by parts yields
\[
2\int_{\mathcal{G}} (A^{-1})_{jk}\,
\frac{|u|^{p-2}u\,t_j}{d^{p\theta}}
[X_{2i_k},X_{2i_k-1}]u\,dz\,dt
=
2\int_{\mathcal{G}} |u|^{p-2}u\,
\left\langle \nabla_{\mathcal{G}}u,\,
(A^{-1})_{jk}\,
\nabla^\perp_{2i_k-1,2i_k}\!\left(\frac{t_j}{d^{p\theta}}\right)\right\rangle dz\,dt,
\]
\[
=\int_{\mathcal{G}} \frac{|u|^{p-2}u}{d^{p\theta-1}}\,\left\langle
\nabla_{\mathcal{G}}u,\,
\lambda^{(j)}_{i_k}(A^{-1})_{jk}\frac{\mathbf{z}^{(i_k)}}{d}
-
2p\theta\,\frac{t_j}{d^{2}}\,
\nabla^\perp_{2i_k-1,2i_k}d
\right\rangle \,dz\,dt.
\]

Since
\[
\sum_{j=1}^h \lambda^{(j)}_{i_k}(A^{-1})_{jk}=1,
\]
summing in $k$ and using the explicit expression of $\mathcal{E}$ in
\eqref{eq:-GCG-definizione-di-E} proves
\eqref{eq:-GCG-relazione-Z-e-E}.
\end{proof}

\medskip
\noindent{\textbf{Acknowledgments}.}
L. Fanelli is partially supported 
by the Spanish Agencia Estatal de Investigaci\'{o}n
through BCAM Severo Ochoa accreditation CEX2021-001142-S/MCIN/AEI/10.13039/501100011033 and 
CNS2023-143893, by IKERBASQUE and by the research project PID2024-155550NB-100 funded by MICIU/AEI/10.13039/501100011033 and FEDER/EU.

\bibliographystyle{plain} 
\bibliography{bibliography-v3}
\end{document}